\documentclass{article}
\usepackage{graphicx} % Required for inserting images
\usepackage{amsfonts}
\usepackage{amscd,color}
\usepackage{amsmath,amsfonts,amssymb,amscd}
\usepackage{booktabs}
\usepackage{mathrsfs}
\usepackage{indentfirst,graphicx,epsfig}
\usepackage{xcolor}
\usepackage{epstopdf}
\usepackage{caption}
\usepackage{ifpdf}
\usepackage{float}
\usepackage{footnote}
\usepackage{epsfig}
\usepackage{latexsym}
\usepackage{caption}
\usepackage{amsmath,amsthm}
\usepackage{amsfonts}
\usepackage{amssymb}
\usepackage{graphicx}
\usepackage{graphics}
\usepackage{bm}
\usepackage{color}
\usepackage{multirow}
\usepackage{subfigure}
\usepackage{threeparttable}
\usepackage{ulem}

\makeatletter

\newcommand{\Rmnum}[1]{\expandafter\@slowromancap\romannumeral #1@}
\makeatother
\makeatletter
\let\@fnsymbol\@arabic
\makeatother

\makeatletter
  \newcommand\figcaption{\def\@captype{figure}\caption}
  \newcommand\tabcaption{\def\@captype{table}\caption}
\makeatother
\begin{document}
\newtheorem{theorem}{Theorem}[section]
\newtheorem{observation}[theorem]{Observation}
\newtheorem{corollary}[theorem]{Corollary}
\newtheorem{algorithm}[theorem]{Algorithm}
\newtheorem{problem}[theorem]{Problem}
\newtheorem{question}[theorem]{Question}
\newtheorem{lemma}[theorem]{Lemma}
\newtheorem{proposition}[theorem]{Proposition}
\newtheorem{definition}[theorem]{Definition}
\newtheorem{guess}[theorem]{Conjecture}
\newtheorem{claim}[theorem]{Claim}
\newtheorem{example}[theorem]{Example}
\newtheorem{acknowledgement}[theorem]{Acknowledgement}
\newtheorem{axiom}[theorem]{Axiom}
\newtheorem{case}[theorem]{Case}
\newtheorem{conclusion}[theorem]{Conclusion}
\newtheorem{conjecture}[theorem]{Conjecture}
\newtheorem{criterion}[theorem]{Criterion}
\newtheorem{exercise}[theorem]{Exercise}
\newtheorem{notation}[theorem]{Notation}
\newtheorem{solution}[theorem]{Solution}
\newtheorem{summary}[theorem]{Summary}
\newtheorem{fact}[theorem]{Fact}
\newtheorem{remark}[theorem]{Remark}
\newtheorem{subclaim}[theorem]{Subclaim}

\newcommand{\pp}{{\it p.}}
\newcommand{\de}{\em}
\newcommand{\mad}{\rm mad}

\newcommand*{\QEDA}{\hfill\ensuremath{\blacksquare}}
\newcommand*{\QEDB}{\hfill\ensuremath{\square}}
\newcommand{\xz}[1]{{\color{purple}[XZ: #1]}}
\newcommand{\zw}[1]{{\color{blue} [zw: #1]}}
\newcommand{\qf}{Q({\cal F},s)}
\newcommand{\qff}{Q({\cal F}',s)}
\newcommand{\qfff}{Q({\cal F}'',s)}
\newcommand{\f}{{\cal F}}
\newcommand{\ff}{{\cal F}'}
\newcommand{\fff}{{\cal F}''}
\newcommand{\fs}{{\cal F},s}
\newcommand{\g}{\gamma}
\newcommand{\wrt}{with respect to }

\title{Finiteness of non-decomposable critically 4 and 5-frustrated signed graphs}
\author{\normalsize Zhiqian Wang$^{1}$\\
\small $^{1}$School of Mathematical Sciences, Zhejiang Normal University, Jinhua, China\\
\small Email: 1522686578@qq.com\\
}

\date{}
\maketitle

\begin{abstract}
A signed graph $(G,\sigma)$ is a graph $G$ with a signature $\sigma$ labeling each edge with a positive or negative sign. Two signatures of $G$ are switching equivalent if one is obtained from the other by changing the signs of all edges in an edge-cut. 
The frustration index of a signed graph $(G, \sigma)$ is  the minimum number of negative edges among all  signatures  equivalent to $\sigma$.
A signed graph is critically $k$-frustrated if it has frustration index $k$, and the removal of any edge decreases its frustration index. A critically $k$-frustrated signed graph is prime if it has no subdivided edge (including multiedge) and none of its  subgraphs is the edge-disjoint union of critically frustrated signed graphs. Steffen and Naserasr et al. conjectured that for any positive integer $k$, there are finitely many prime critically $k$-frustrated signed graphs. The cases $k=1,2,3$ have been proved to be true recently by Cappello et al.. In this paper, we show that the conjecture holds when $k=4$ and $5$.
\end{abstract}
\noindent \textbf{Keywords:}
signed graphs; frustration index; projective plane

\section[Introduction]{Introduction}
 
Graphs in this paper may have multiedges and loops. A \textit{signed graph} $(G,\sigma)$ is a graph $G$ together with a \textit{signature} $\sigma$, which is a mapping $\sigma$: $E(G)\mapsto \{+,-\}$  that assigns to each edge a positive sign or a negative sign. An edge $e$ of $(G,\sigma)$ is called \textit{negative} if $\sigma(e)=-$ and \textit{positive} otherwise. The set of positive and negative edges are denoted by $E_\sigma^+(G)$ and $E_\sigma^-(G)$, respectively. When the underlying graph $G$ and the signature $\sigma$ is clear from the context, we may use $E^+$ and $E^-$  instead. We denote by $(G,-)$ (or $(G,+)$) the signed graph with all edges negative (or positive).  

For $X\subseteq V(G)$, let $X^c:=V(G)\setminus X$. Let $\partial_G(X):=\{xy\in E(G): x\in X, y\in X^c\}$, which is called an \textit{edge cut} of $G$. The cardinality of $\partial_G(X)$ is denoted by $d_G(X)$. 
 For a signed graph $(G, \sigma)$, let 
\begin{displaymath}
    d_{(G,\sigma)}^+(X)=|\partial_G(X)\cap E_\sigma^+(G)| \mbox{ and }  d_{(G,\sigma)}^-(X)=|\partial_G(X)\cap E_\sigma^-(G)|.
\end{displaymath}

 Whenever it is clear from the context, we may omit the subscript $(G,\sigma)$.  An edge cut $\partial_G(X)$ is called an $(a,b)$-cut if $d_{(G,\sigma)}^+(X)=a$ and $d_{(G,\sigma)}^-(X)=b$. An $(a,a)$-cut  is called an \textit{equilibrated cut}.

A \textit{cycle} in $G$ is a connected 2-regular subgraph. A cycle is   \textit{positive} (or \textit{negative}) if it contains an even number (or odd number) of negative edges. A signed graph $(G, \sigma)$ is called \textit{balanced} if it does not contain negative cycle and \textit{unbalanced} otherwise.

 For an edge cut $\partial_G(X)$, \textit{switching} at $\partial_G(X)$ means  changing the signs of all the edges in $\partial_G(X)$, and the signs of all the other edges remain unchanged. Two signatures $\sigma$ and $\pi$  are  \textit{switching equivalent} if one is obtained from the other by a switching  at some edge cut. We say that $(G,\sigma)$ and $(G,\pi)$ are switching equivalent when $\sigma$ and $\pi$ are  switching equivalent signatures. It was shown in \cite{82} that two signatures on the same graph are switching equivalent if and only if they have the same set of negative cycles.

The \textit{frustration index} of a signed graph $(G,\sigma)$, denoted by $l(G,\sigma)$, is defined as 
\begin{displaymath}
l(G,\sigma)=\min\{|E_\pi^-(G)|:(G,\pi) \mbox{ is switching equivalent to } (G,\sigma)\}.
\end{displaymath}
If $l(G,\sigma)=k$, then $(G,\sigma)$ is said to be \textit{$k$-frustrated}. A signature $\sigma$ is said to be a \textit{minimum signature} if $|E_\sigma^-(G)|=l(G,\sigma)$.
Thus if $\sigma$ is a minimum signature and $\partial(X)$ is an $(a,b)$-cut of $(G, \sigma)$, then $a \ge b$.

It is known \cite{fi}  that the frustration index $l(G,\sigma)$ of a signed graph equals the least cardinality of a subset $E'$ of $G$ such that $(G-E', \sigma)$ has no negative cycles.  
It follows that for any graph $G$, $l(G,-) = |E(G)|- {\rm mc}(G)$, where  
 ${\rm mc}(G)$ is the number of edges in a   maximum cut of $G$. As it is NP-hard to determine the maximum cut of graphs \cite{np}, it is NP-hard to determine the frustration indices of signed graphs. 
 
The concept of critically $k$-frustrated signed graph was introduced in \cite{22}.
\begin{definition}
A signed graph $(G,\sigma)$ is \textit{critically $k$-frustrated} if $l(G,\sigma)=k$ and for every $e\in E(G)$, $l(G\setminus e,\sigma)=k-1$.
\end{definition}

We may simply say $(G,\sigma)$ is \textit{critical} in short when the exact value of $l(G,\sigma)$ needs not to be emphasized. In \cite{22}, some basic properties of critically $k$-frustrated signed graphs were proved.

\begin{theorem}\label{cut}
(\cite{22}) Let $k$ be a positive integer and $(G,\sigma)$ be a $k$-frustrated signed graph. The following statements are equivalent:
\begin{enumerate}
    \item $(G,\sigma)$ is critically $k$-frustrated.
    \item For every edge $e\in E(G)$, there exists a signature $\sigma^\prime$ equivalent to $\sigma$  such that $|E_{\sigma^\prime}^-(G)|=k$ and $e\in E_{\sigma^\prime}^-(G)$.
    \item If $|E_{\sigma}^-(G)|=k$, then every $e\in E_\sigma^+(G)$ is contained in an equilibrated edge cut of $(G, \sigma)$.
\end{enumerate}
\end{theorem}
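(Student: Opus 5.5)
I would prove the cycle of implications $1\Rightarrow 2\Rightarrow 3\Rightarrow 1$. The tools are two elementary facts about switching. First, switching at $\partial(X)$ changes the number of negative edges by $d^+(X)-d^-(X)$, since the positive cut edges become negative and vice versa. Second, every signature equivalent to $\sigma$ arises from a single switching at some cut $\partial(X)$, because the composition of switchings at $\partial(X)$ and $\partial(Y)$ is the switching at $\partial(X\triangle Y)$.

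For $1\Rightarrow 2$, fix $e$ and assume $(G,\sigma)$ is critically $k$-frustrated. Then $l(G\setminus e,\sigma)=k-1$, so some signature $\pi$ of $G\setminus e$, equivalent to $\sigma|_{G\setminus e}$, has exactly $k-1$ negative edges. Write $\pi$ as $\sigma$ switched at $\partial_{G\setminus e}(X)$, and apply the same switching at $\partial_G(X)$ in $G$ to get $\sigma'$. Then $\sigma'$ agrees with $\pi$ off $e$, so $|E^-_{\sigma'}(G)|\le k$. Since $l(G,\sigma)=k$, we must have $|E^-_{\sigma'}(G)|=k$, which forces $e$ to be negative in $\sigma'$.

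For $2\Rightarrow 3$, let $|E^-_\sigma|=k$ and let $e$ be positive. By (2) there is an equivalent $\sigma'$ with $k$ negative edges and $e\in E^-_{\sigma'}$. Write $\sigma'$ as $\sigma$ switched at $\partial(X)$. The count is unchanged, so $d^+(X)=d^-(X)$ and $\partial(X)$ is equilibrated. Because $e$ changed sign, $e\in\partial(X)$. (If $\sigma'=\sigma$, then $e$ would already be negative, contradicting the choice of $e$.)

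For $3\Rightarrow 1$, first note that deleting an edge lowers the frustration index by at most one. Take any minimum signature $\sigma_0$ equivalent to $\sigma$; (3) is applied to this $\sigma_0$, which is the reading under which the hypothesis is meant. Fix $e$.
\begin{itemize}
\item If $e$ is negative in $\sigma_0$, then $G\setminus e$ has only $k-1$ negative edges, so $l(G\setminus e,\sigma)\le k-1$.
\item If $e$ is positive, (3) gives an equilibrated cut containing $e$. Switching there keeps $k$ negative edges and makes $e$ negative, which reduces to the previous case.
\end{itemize}
In both cases $l(G\setminus e,\sigma)\le k-1$. Equality holds because any subset $E'$ of $G\setminus e$ whose removal leaves no negative cycles, together with $e$, does the same in $G$, so $l(G,\sigma)\le l(G\setminus e,\sigma)+1$.

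The only delicate step is the bookkeeping in $1\Rightarrow 2$: checking that a switching of $G\setminus e$ lifts to $G$ and that the minimality of $k$ pins down the sign of $e$. It also requires interpreting (3) as a statement about all minimum signatures. The rest is routine.
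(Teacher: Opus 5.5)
Your proof is correct. The cycle $1\Rightarrow2\Rightarrow3\Rightarrow1$ rests on two facts: switching at $\partial(X)$ changes the number of negative edges by $d^+(X)-d^-(X)$, and a switching of $G\setminus e$ lifts to $G$. This is the standard elementary switching argument, and you are right to read (3) as a statement about a minimum signature, since literally it is vacuous when $\sigma$ is not minimum and $3\Rightarrow1$ would then fail. The paper gives no proof of this theorem (it quotes it from \cite{22}), so there is no proof of its own to compare yours against.
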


A critically $k$-frustrated signed graph $(G,\sigma)$ is said to be \textit{$(k_1,\ldots,k_t)$ -decomposable} if $E(G)$ can be partitioned into $t$ parts $E_1 \cup \ldots \cup E_t$ ($t \ge 2$) such that for $i\in\{1,\ldots,t\}$, the signed subgraph $(G[E_i],\sigma)$ is critically $k_i$-frustrated and $k=k_1+\ldots + k_t$. We simply say $(G,\sigma)$ is \textit{decomposable} when the parameters $k_1,\ldots,k_t$ need not to be emphasized. A critical signed graph is \textit{indecomposable} if it is not decomposable.

Note that if two parallel edges $e_1$ and $e_2$ have different signs, then any equilibrated cut of $(G,\sigma)$ is also an equilibrated cut of $(G\setminus\{e_1,e_2\},\sigma)$. Hence by Theorem \ref{cut}, we have the following observation of indecomposable critical signed graphs.
\begin{observation}
If $(G,\sigma)$ is indecomposable, then it contains no negative loop or two parallel edges of different signs.
\end{observation}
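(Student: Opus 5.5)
We argue by contraposition: assuming that $(G,\sigma)$ is critically $k$-frustrated and contains a negative loop or a pair of parallel edges of different signs, we exhibit a decomposition. Throughout we use the equivalence of conditions 1 and 3 in Theorem \ref{cut}. A loop lies in no edge cut and is unchanged by switching, so a negative loop $e$ is negative in every equivalent signature. We may therefore fix a minimum signature $\sigma$ with $|E^-_\sigma(G)|=k$.

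\textbf{Negative loop.} Let $e$ be a negative loop. The one-edge graph $(\{e\},\sigma)$ is critically $1$-frustrated, since deleting $e$ leaves a balanced graph. It remains to show that $(G\setminus e,\sigma)$ is critically $(k-1)$-frustrated. Its frustration index is exactly $k-1$: by criticality of $G$ we have $l(G\setminus e,\sigma)=k-1$, and $\sigma$ restricted to $G\setminus e$ has exactly $k-1$ negative edges, so this restriction is minimum. Every positive edge $f$ of $G\setminus e$ lies in an equilibrated cut of $(G,\sigma)$ by Theorem \ref{cut}. Since $e$ is a loop, it lies in no cut, so that cut is also an equilibrated cut of $G\setminus e$. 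Hence condition 3 holds and $G\setminus e$ is critical. If $G\setminus e$ has no edges at all, then $G$ is just the loop; this degenerate case is excluded, or $G$ is trivially decomposable-free and the observation concerns $t\ge2$. The partition $\{e\}\cup(E(G)\setminus e)$ is then a $(1,k-1)$-decomposition.

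\textbf{Parallel edges of different signs.} Let $e_1,e_2$ be parallel edges with $\sigma(e_1)\neq\sigma(e_2)$. Switching at any cut either contains both edges or neither, so they keep different signs in every equivalent signature. Their 2-cycle is therefore negative, and the digon $(\{e_1,e_2\},\sigma)$ is critically $1$-frustrated.

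We pass to a minimum signature $\sigma$; in it exactly one of $e_1,e_2$ is negative. Then $H=G\setminus\{e_1,e_2\}$ carries $k-1$ negative edges, so $l(H,\sigma)\le k-1$. Conversely, $l(H,\sigma)\ge l(G,\sigma)-1$, because adding back the digon raises the frustration index by at most one (a minimum signature of $H$ can be extended by making one of $e_1,e_2$ negative). Hence $l(H,\sigma)=k-1$ and $\sigma|_H$ is minimum.

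For each positive edge $f$ of $H$, Theorem \ref{cut} gives an equilibrated cut $\partial(X)$ of $(G,\sigma)$ containing $f$. Either this cut contains both $e_1$ and $e_2$, or it contains neither. In the first case it contributes one positive and one negative edge, so removing the pair preserves equilibrium. In either case $\partial_H(X)$ is an equilibrated cut of $H$ containing $f$. By Theorem \ref{cut}, $H$ is critically $(k-1)$-frustrated, and we again obtain a $(1,k-1)$-decomposition.

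\textbf{Main obstacle.} The only delicate point is confirming that the frustration index of the remaining graph is exactly $k-1$, so that Theorem \ref{cut} applies to it with the restricted minimum signature. This follows from the two inequalities above.
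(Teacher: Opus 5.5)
Your proof is correct and follows the paper's argument. An equilibrated cut of $(G,\sigma)$ either avoids the loop or digon, or meets the digon in one positive and one negative edge, so it restricts to an equilibrated cut of the remainder; condition 3 of Theorem \ref{cut} then makes the remainder critically $(k-1)$-frustrated. You also check explicitly that the remainder has frustration index exactly $k-1$ and treat the loop case, both of which the paper leaves implicit.

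The one loose end is the degenerate case. Applying Theorem \ref{cut} to the remainder needs $k-1\ge 1$, and for $k=1$ a lone negative loop or a lone negative digon is genuinely indecomposable. So the observation must be read for $k\ge 2$, and you should state this cleanly for both the loop and the digon, replacing the muddled remark in your loop paragraph.
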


Assume $t \ge 1$ and there is a set $E_{xy}$ of $t$-parallel edges between two vertices $x$ and $y$  of the same sign.  \textit{Subdividing}  $E_{xy}$   means deleting $E_{xy}$ and adding a new vertex $w$, as well as a set $E^+_{xw}$ of $t$ positive  parallel edges between $x$ and $w$  and a set $E_{wy}$ of $t$ parallel edges between $w$ and $y$ that have the same sign as $E_{xy}$. {A signed graph $(G', 
\sigma')$ is a \textit{subdivision} of $(G, \sigma)$ if $(G', 
\sigma')$ is obtained from $(G, \sigma)$ by sequentially subdividing some multiedges, and furthermore, if $(G', 
\sigma')\neq (G, \sigma)$, $(G', 
\sigma')$ is a \textit{proper subdivision} of $(G, \sigma)$.} We say a signed graph is \textit{irreducible} if it is not a proper subdivision of any signed graph.

It was observed in \cite{22}  that if $(G', \sigma')$ is a subdivision of $(G, \sigma)$, then the following claims hold:
  \begin{enumerate}
      \item $l(G', \sigma') = l(G, \sigma)$.
      \item $(G, \sigma)$ is critical if and only if $(G', \sigma')$ is critical.
      \item $(G, \sigma)$ is decomposable if and only if $(G', \sigma')$ is decomposable. 
  \end{enumerate}   

Among all the critical signed graphs, we are interested in the primitive ones performing as elementary structures in other critical signed graphs. They cannot be a subdivision of another signed graph, and moreover, remember that a signed graph is determined by its negative cycles, the set of negative cycles of such signed graphs should not be partitioned. Derived from the multiple weak $2$-linkage problem proposed by Y. Lu et al. \cite{link}, the class of such critical signed graphs were strictly defined by Steffen et al. in \cite{22}.

\begin{problem}
(Multiple Weak $2$-Linkage Problem) For any integer $k\geq 2$, is there a pair of integers $1\leq i<j\leq k$ such that the graph $G$ contains a pair of edge-disjoint paths $P_i$ and $P_j$ such that $P_r$ joins $x_r$ and $y_r$ for each $r=i,j$?
\end{problem}

\begin{definition}\label{def-prime}
A critical signed graph $(G, \sigma)$ is \textit{prime} if it is irreducible and contains no pair of edge-disjoint negative cycles.  
\end{definition}

For a prime critical signed graph $(G, \sigma)$ whence $l(G, \sigma)=k$ and $\sigma$ is a minimum signature, we let $E^-_\sigma(G)=\{x_1y_1,\ldots,x_ky_k\}$ and the all-positive subgraph $G\setminus E^-_\sigma(G)=G^+$. Notice that $(G, \sigma)$ contains no pair of edge-disjoint negative cycles if and only if for any pair of integers $1\leq i<j\leq k$, there does not exist edge-disjoint paths $P_i$ and $P_j$ in $G^+$ joining $\{x_i, y_i\}$ and $\{x_j, y_j\}$ respectively. So the concept prime critical signed graph is virtually the specialization of multiple weak $2$-linkage problem.

Since every negative cycle is a critically $1$-frustrated signed graph, in the perspective of decomposition, it is not difficult to perceive the following equivalent description of prime critical signed graphs.
\begin{observation}
A critical signed graph $(G,\sigma)$ is prime if and only if $(G,\sigma)$ is irreducible and every critical subgraph is indecomposable.
\end{observation}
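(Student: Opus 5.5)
The statement to prove is that a critical signed graph $(G,\sigma)$ is prime if and only if it is irreducible and every critical subgraph is indecomposable. Irreducibility appears on both sides, so the task reduces to one equivalence: $(G,\sigma)$ has no pair of edge-disjoint negative cycles if and only if every critical subgraph of $(G,\sigma)$ is indecomposable. I will prove each implication by contraposition, working throughout with critical subgraphs $(H,\sigma|_H)$ of $(G,\sigma)$.

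For the direction ``prime implies every critical subgraph is indecomposable'', suppose some critical subgraph $(H,\sigma)$ is decomposable. Then $E(H)=E_1\cup\cdots\cup E_t$ with $t\ge 2$, and each $(H[E_i],\sigma)$ is critically $k_i$-frustrated with $k_i\ge 1$. Here $k_i\ge 1$ holds because every edge of a critical signed graph must lower the frustration index when removed, so the index cannot be $0$ for a nonempty edge set. A signed graph with positive frustration index is unbalanced, so each $H[E_1]$ and $H[E_2]$ contains a negative cycle. These two cycles are edge-disjoint, which contradicts primality.

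For the converse, suppose $C_1$ and $C_2$ are edge-disjoint negative cycles in $G$, and let $H=C_1\cup C_2$ as an edge-disjoint union. A negative cycle is critically $1$-frustrated, since it is unbalanced and deleting any edge leaves a forest. So $(H,\sigma)$ is a critical subgraph as soon as we know that $l(H,\sigma)=2$ and that it drops to $1$ after deleting any edge. The upper bound $l(H)\le 2$ comes from deleting one edge of each cycle. For the lower bound, if removing a single edge $e$ balanced $H$, then the cycle among $C_1,C_2$ not containing $e$ would survive as a negative cycle; hence $l(H)=2$. Deleting an edge from $C_1$ leaves $C_2$ plus a forest attached to it, and this has index $1$ (it is at most $1$ by deleting one edge of $C_2$, and at least $1$ because $C_2$ is negative). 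The same argument applies to $C_2$. Therefore $H$ is critically $2$-frustrated, and the partition $E(C_1)\cup E(C_2)$ is a $(1,1)$-decomposition of it, so $H$ is a decomposable critical subgraph.

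The step I expect to need the most care is computing the frustration index of $C_1\cup C_2$ when the two cycles share vertices. I handle it using only the characterization recalled in the introduction, that $l$ is the minimum number of edges whose deletion removes all negative cycles, together with the fact that removing one edge destroys at most one of two edge-disjoint cycles. This argument is unaffected by shared vertices, and loops and multiedges behave the same way.
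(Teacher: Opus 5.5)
Your first direction is correct: a decomposition $E_1\cup\cdots\cup E_t$ of a critical subgraph has nonempty parts with $k_i\ge 1$, so $H[E_1]$ and $H[E_2]$ are unbalanced and contain two edge-disjoint negative cycles. Your overall route is also the one the paper intends; its only justification is that negative cycles are critically $1$-frustrated.

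The gap is in the converse, at exactly the step you singled out. For an \emph{arbitrary} pair of edge-disjoint negative cycles, $H=C_1\cup C_2$ need not be critically $2$-frustrated, and shared vertices do matter. If $|V(C_1)\cap V(C_2)|\ge 2$, then $C_1-e$ is a path meeting $C_2$ in several vertices, so $H-e$ is not ``$C_2$ plus a forest''. Likewise, after deleting one edge of each cycle, cycles built from arcs of both may survive. Concretely, $(K_5,-)$ is the edge-disjoint union of the negative $5$-cycles $0\,1\,2\,3\,4$ and $0\,2\,4\,1\,3$, yet $l(K_5,-)=10-\mathrm{mc}(K_5)=4$. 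For instance, deleting $01$ and $02$ leaves the negative triangle $2\,3\,4$. So $\{E(C_1),E(C_2)\}$ is not a $(1,1)$-decomposition there, and your sentence that the argument ``is unaffected by shared vertices'' is what fails. Your lower bound $l(H)\ge 2$ is fine; the upper bound and the criticality are the broken parts. Even with exactly two common vertices, an arbitrary choice of one edge per cycle need not balance $H$.

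The statement survives, but you need an extra idea: choose the pair with $|E(C_1)|+|E(C_2)|$ minimum.
\begin{itemize}
\item If the cycles share at most one vertex, your argument applies as written.
\item Otherwise, let $Q$ be a subpath of $C_2$ with distinct ends $a,b$ on $C_1$ and no interior vertex on $C_1$. Split $C_1$ into $a$--$b$ paths $P,P'$ with $P\cup Q$ negative.
\item Then $P'\cup(C_2-E(Q))$ is an even subgraph with an odd number of negative edges, edge-disjoint from $P\cup Q$, so it contains a negative cycle. Minimality forces it to be a cycle itself, so $P'$ has no interior vertex in $V(C_1)\cap V(C_2)$.
\item Applying this to every such $Q$ shows that both cycles pass through the common vertices $v_1,\ldots,v_m$ in the same cyclic order. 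It also shows that the arcs $P'_i\subseteq C_1$ and $Q_i\subseteq C_2$ between $v_i$ and $v_{i+1}$ have equal signs (suitably paired when $m=2$).
\end{itemize}
Thus $H$ is a cyclic chain of positive cycles $P'_i\cup Q_i$ meeting only at the $v_i$. Deleting any edge of $P'_i$ together with any edge of $Q_i$ balances $H$. This gives $l(H)=2$ and $l(H-e)=1$ for every $e$, so $H$ is a $(1,1)$-decomposable critical subgraph, as required.
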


The following conjecture was proposed in \cite{link}:

\begin{conjecture}
\label{conj-key}
For any positive integer $k$, there are only finitely many prime critically $k$-frustrated signed graphs. 
\end{conjecture}

It was proved in \cite{22} that $C_{-1}$ and $(K_4,-)$ are the only prime critically 1-frustrated and 2-frustrated signed graphs. The case $k=3$ of Conjecture \ref{conj-key} was confirmed in \cite{25}. 

In this paper, we prove that Conjecture \ref{conj-key} holds for cases $k=4$ and $5$. We denote by $\mathcal{S}^*(k)$ the family of prime critically $k$-frustrated signed graphs.

\begin{theorem}\label{mytheorem}
For $k \in \{4,5\}$,  $\mathcal{S}^*(k)$ is finite.
\end{theorem}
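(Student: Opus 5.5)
Since a prime critical signed graph is irreducible, it suffices to bound $|V(G)|$ and the edge multiplicities of every $(G,\sigma)\in\mathcal{S}^*(k)$ for $k\in\{4,5\}$ by a function of $k$. Finitely many graphs then remain. I would fix a minimum signature $\sigma$ with $E^-_\sigma(G)=\{x_1y_1,\ldots,x_ky_k\}$ and work in the all-positive subgraph $G^+$. By primeness, for every $i<j$ the pairs $\{x_i,y_i\}$ and $\{x_j,y_j\}$ cannot be joined by edge-disjoint paths in $G^+$. By Theorem \ref{cut}, every positive edge lies in an equilibrated cut. Because $G$ is irreducible, every vertex outside the at most $2k$ terminals has degree at least $3$ once multiedges are counted. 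Otherwise that vertex, together with its two neighbours joined by equal parallel classes, would be a subdivided multiedge. So the real task is to bound the number of non-terminal vertices.

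\textbf{Step 1: structure of $G^+$.} I would apply the edge-disjoint version of the weak $2$-linkage theorem to the pairs $\{x_i,y_i\},\{x_j,y_j\}$. This is a Thomassen/Seymour-type characterization, which after reducing along small edge cuts says the graph embeds in the plane with the four terminals on the outer face in crossing order. Equivalently, $(G,\sigma)$ behaves like a signed graph on the projective plane in which each pair of negative cycles must cross. I would first show that $G$ is $3$-edge-connected away from the terminals, i.e.\ every edge cut separating no terminal pair has size at least $3$. The argument: a $2$-edge cut (or a $1$-edge cut) with no terminals on one side either contradicts criticality, because that side could be contracted, or yields a subdivision, contradicting irreducibility. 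I would also show that small cuts containing terminals force a decomposition into critical pieces with $k_1+k_2=k$, which is excluded by primeness.

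\textbf{Step 2: counting via equilibrated cuts.} For each positive edge $e$, Theorem \ref{cut} gives an equilibrated cut containing $e$. Such a cut has at most $k$ negative edges, so it has at most $2k$ edges. I would then take a maximal laminar family of these cuts (uncrossing them via submodularity of $d^+$ and $d^-$) that still covers all positive edges. The family is laminar, and each cut has size at most $2k\le 10$. Combined with the degree-$\ge 3$ condition, the cut tree then has bounded size, and each atom carries boundedly many vertices. The reason is that an atom with many vertices and only small boundary would contain an uncovered positive edge or a subdivided edge. Together these give $|V(G)|\le f(k)$. Multiplicities are bounded by $2k$ as well, since a class of $t$ parallel positive edges lies inside some equilibrated cut.

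\textbf{Main obstacle.} I expect the hardest step to be the uncrossing in Step 2 and the exclusion of the decomposable configurations for $k=4,5$. When two equilibrated cuts cross, the corners need not be equilibrated. One must then use the minimality of $\sigma$ (every cut satisfies $a\ge b$) to show that a corner is still an equilibrated cut containing the edge. I would try the inequality
\[
d^+(X\cap Y)-d^-(X\cap Y)+d^+(X\cup Y)-d^-(X\cup Y)\le d^+(X)-d^-(X)+d^+(Y)-d^-(Y)=0,
\]
which forces both corners to be equilibrated. Strictly, this requires the cross term between $X\setminus Y$ and $Y\setminus X$ to be handled: positive crossing edges help the inequality, negative ones hurt it. The negative edges there number at most $k$, and I would dispose of them by a case analysis on how the at most $5$ negative edges sit relative to $X$ and $Y$. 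This case analysis is where the restriction $k\le 5$ should enter. For larger $k$ the number of configurations, with negative edges among the corners that allow non-equilibrated corners, is too large to exclude with the linkage obstruction from Step 1.
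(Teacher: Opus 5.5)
Your proposal has a genuine gap, and it sits at the central counting claim of Step 2: that \emph{``the cut tree then has bounded size.''} Nothing supports it, and it is false as a general principle. A laminar family whose cuts all have size at most $2k$ can still contain arbitrarily long nested chains $X_1\subsetneq X_2\subsetneq\cdots$. For example, in the prism $C_n\square K_2$ the sets formed by the first $j$ rungs give $n-1$ nested $4$-edge cuts. A single negative edge can lie in every cut of such a chain, so having only $k$ negative edges does not cap the family either. Your atom argument controls only what sits inside one atom, not how many atoms there are. Showing that such chains are short is essentially the whole theorem.

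The uncrossing is also weaker than you hope. Put $f=d^+-d^-$, and let $e^{+},e^{-}$ count the positive and negative edges between $X\setminus Y$ and $Y\setminus X$. For equilibrated $\partial(X),\partial(Y)$ one has exactly $f(X\cap Y)+f(X\cup Y)=2(e^{-}-e^{+})$. Minimality of $\sigma$ then forces $e^{-}\ge e^{+}$, so the corners are equilibrated only when this diagonal is balanced. Even then, its positive edges lie in neither $\partial(X\cap Y)$ nor $\partial(X\cup Y)$, so the new family may stop covering $E^+$. In the canonical embedding, equilibrated cuts are curves joining points of $C$. Crossing pairs typically have negative edges on a diagonal, so this is the generic situation, not a small case analysis.

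The missing idea is the explicit structure given by Theorem \ref{basis}. An equilibrated cut is a curve through faces of the plane cubic graph $G'$ with both ends on $C'$ and at most $k$ positive edges, and this is what anchors every face to the boundary. The paper then makes two counting steps, and this is where $k\in\{4,5\}$ actually enters; it does not enter in any uncrossing.

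\textbf{Boundary faces.} Lemma \ref{weight} bounds the weight of each boundary face. Proposition \ref{finite}, built on the small-cut descriptions (Observation \ref{matching}, Corollary \ref{2+2}, Proposition \ref{3+1}), bounds runs of weight-$0$ edges on $C'$ by $1$ for $k=4$ and by $3$ for $k=5$. Without this, arbitrarily many weight-$0$ boundary faces are not excluded.

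\textbf{Internal faces.} Each internal face is adjacent to at least two boundary faces (Lemma \ref{inner2}). Any two faces have at most three common neighbours (Lemma \ref{inner3}, via Corollary \ref{4cut}). Together these bound the number of internal faces.

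Your proposal has no counterpart to either step. Any laminar-family version would still need an argument that every cut is anchored on a boundary of bounded length, and that is the real content of the proof.
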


\section{Finiteness of $\mathcal{S}^*(4)$ and $\mathcal{S}^*(5)$}
\subsection{$\mathcal{S}^*(k)$ on projective plane} 

The proof of Theorem \ref{mytheorem} is based on a description of signed graphs in $\mathcal{S}^*(k)$, which is derived from the topological characterization of graphs whose answer to multiple weak $2$-linkage problem is false.

\begin{theorem}\label{basis}
(\cite{22}) Let $k\geq 2$ and $(G,\sigma)$ be a signed graph in $\mathcal{S}^*(k)$ with $E^-_\sigma(G)=\{x_1y_1,\ldots, x_ky_k\}$. Then there exists a permutation $\tau$ on $[k]$, and $(G,\sigma)$ is obtained from a 2-connected plane cubic graph $G'$ (all its edges are positive) by selecting a facial circuit $C$, inserting $2k$ distinct vertices $z_1,\ldots, z_{2k}$ on the edges of $C$ in this cyclic order, and adding $k$ negative edges $e_i=x_iy_i$, where $\{z_i, z_{k+i}\}=\{x_{\tau(i)}, y_{\tau(i)}\}$ for $i\in [k]$.

Furthermore, for $k\geq 3$, if $(G,\sigma)\in \mathcal{S}^*(k)$, then the underlying graph $G$ is essentially 4-edge-connected.
\end{theorem}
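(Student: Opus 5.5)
Since Theorem~\ref{basis} is quoted from \cite{22}, I only outline how I would prove it. The plan is to turn primality into a statement about the all-positive graph $G^+=G\setminus E^-_\sigma(G)$ and then apply the classical characterization of graphs without a $2$-linkage (Seymour, Thomassen, Shiloach).

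Fix a minimum signature $\sigma$ with $E^-_\sigma(G)=\{x_1y_1,\ldots,x_ky_k\}$. By Definition~\ref{def-prime}, for every $i<j$ there are no edge-disjoint paths in $G^+$ joining $x_i$ to $y_i$ and $x_j$ to $y_j$.

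\textbf{Step 1: local structure.} I would first show that every vertex of $G$ has degree $3$, that the $2k$ terminals are distinct, and that each terminal is incident with exactly one negative edge. The tools are the following.
\begin{itemize}
\item Criticality, via Theorem~\ref{cut}(3): every positive edge lies in an equilibrated cut. This forbids degree $\le 2$ except at subdivision vertices, and irreducibility removes those.
\item The Observation on indecomposable graphs: there are no negative loops and no parallel edges of different signs.
\item Splitting at a vertex $v$ of degree $\ge 4$. Write $\partial(\{v\})=A\cup B$ with $|A|,|B|\ge 2$. If no equilibrated cut or negative cycle needs both sides, detaching the parts through $A$ and $B$ yields either a decomposition or two edge-disjoint negative cycles. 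Both contradict primality.
\end{itemize}
Once $G$ is subcubic, edge-disjoint paths in $G^+$ coincide with internally vertex-disjoint paths. The problem then becomes the ordinary vertex $2$-linkage problem.

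\textbf{Step 2: planarity.} Apply the $2$-linkage theorem to $G^+$ with terminals $x_i,y_i,x_j,y_j$. Since no linkage exists, after reducing separations of order $\le 3$, $G^+$ embeds in the plane with $x_i,x_j,y_i,y_j$ in this cyclic order on the outer face.

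I would then use essential $4$-edge-connectivity, together with primality, to rule out the reductions. The argument for essential $4$-edge-connectivity goes as follows. Suppose $\partial(X)$ is a cut with $|\partial(X)|\le 3$ and both sides nontrivial. Contracting each side produces two smaller critical signed graphs whose frustration indices sum to $k$. This gives either a decomposition or a proper subdivision, a contradiction. For $k\ge 3$ this rules out all such cuts; for $k=2$ the conclusion is checked directly for $(K_4,-)$.

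This yields a single embedding of $G^+$, namely a $2$-connected cubic plane graph $G'$ after suppressing the degree-$2$ terminals. All $2k$ terminals lie on one facial circuit $C$, and for every pair $i<j$ the terminals $x_i,y_i$ separate $x_j$ from $y_j$ on $C$.

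\textbf{Step 3: combinatorics of the cyclic order.} On the circle $C$ the $k$ chords $x_iy_i$ pairwise cross. A family of pairwise crossing chords with $2k$ distinct endpoints must be antipodal: label the points $z_1,\ldots,z_{2k}$ in cyclic order. The chord at $z_i$ has $k-1$ other chords crossing it, so exactly $k-1$ endpoints lie strictly on each side of it, which forces its other end to be $z_{k+i}$. This defines the permutation $\tau$.

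\textbf{Main obstacle.} I expect the hardest part to be Step~2: showing that the embeddings obtained separately for each pair $i,j$ can be taken to be one and the same, with all terminals on a common face. The $2$-linkage theorem only gives planarity modulo reductions along small separations. My first attempt would be to use essential $4$-edge-connectivity, together with $3$-connectivity of the cubic graph $G'$ after suppressing terminals, to obtain uniqueness of the embedding via Whitney's theorem. Then the face containing $x_1,y_1,x_2,y_2$ must contain every terminal, since each $x_j$ must share a face with both $x_1$ and $x_2$ in a unique embedding.
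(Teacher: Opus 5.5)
Your translation of primality into the absence of disjoint linkages in the subcubic graph $G^+$ is fine, and so is your Step~3 (pairwise crossing chords must join antipodal points $z_i,z_{k+i}$). The decisive step, the one you flag yourself, is missing, and the remedy you propose does not work. The paper does not prove this theorem. It quotes it from \cite{22} and describes it as derived from the characterization of graphs with no \emph{multiple} weak $2$-linkage \cite{link}, which is a statement about all pairs at once.

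Applying the ordinary $2$-linkage theorem to a single pair $(i,j)$ only controls $G^+$ modulo pieces behind separations of order at most $3$ that avoid $x_i,y_i,x_j,y_j$. Such pieces may carry terminals of other pairs. The corresponding cuts of $G$ then contain negative edges, so essential $4$-edge-connectivity of $G$ does not rule them out.

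For instance, suppose a canonical embedding has a bridge face $F$, a situation the paper has to handle in Proposition~\ref{bridgeface}. The $C_4$-face $A_1$ with its two terminals has $|\partial_G(V(A_1))|=4$ (two positive and two negative edges). Yet it is attached to the rest of $G^+$ only by the two edges of $E(C_F)\cap E(C')$. So for every pair not using its terminals, the linkage theorem reduces it away and says nothing about where those terminals sit.

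Your Whitney argument needs $G'$ to be $3$-connected, which you do not prove and which is not true in general. The theorem only asserts $2$-connectivity, and $E(C_F)\cap E(C')$ above is a $2$-edge-cut of $G'$; for $(K_4,-)$, $G'$ is even the degenerate vertexless circle. Across such a $2$-edge-cut the embedding of $G^+$ is not unique: $A_1$ can be flipped so that its terminals leave the outer face. This is precisely how the embeddings obtained for different pairs can disagree. What is missing is a genuinely global argument covering all pairs simultaneously, for example the multiple weak $2$-linkage characterization, or a careful gluing through the $2$- and $3$-separations of $G^+$.

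Two further steps are not yet arguments.

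\begin{itemize}
\item In Step~1, the case where some equilibrated cut or negative cycle does use edges of both $A$ and $B$ is the whole difficulty in excluding vertices of degree at least $4$, and you leave it open.
\item Your proof of essential $4$-edge-connectivity does not work as written. Contracting a side of $\partial_G(X)$ produces graphs that are not subgraphs of $G$, so it can neither yield a partition of $E(G)$ into critical subgraphs nor exhibit $G$ as a proper subdivision. The claim that the two indices sum to $k$ is also unsupported. What criticality actually gives for a cut of size $2$ or $3$ is $l(G,\sigma)=l(G[X],\sigma)+l(G[X^c],\sigma)+1$. Primality then forces one side to be balanced, and you still need an argument that this side is trivial.
\item For $k=2$ you fall back on $\mathcal{S}^*(2)=\{(K_4,-)\}$. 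You have not proved this, and it is naturally a consequence of the very structure you are trying to establish, so it cannot serve as an input.
\end{itemize}
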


By Theorem \ref{basis}, a signed graph $(G, \sigma) \in \mathcal{S}^*(k)$ is embedded on the projective plane, with the $k$ negative edges embedded in a cross cap. Such an embedding of $(G, \sigma)$ is called an \textit{canonical embedding}. As an example, it was proved in \cite{25} that $\mathcal{S}^*(3)$  consists of two members, and the canonical embedding of them are shown in Figure \ref{examples}. We use solid and dashed lines in this paper to represent positive and negative edges respectively. The outer dotted cycle in Figure \ref{examples} is used to emphasize the border of the cross cap, and in the remaining figures of the paper, it is omitted for convenience.

\begin{figure}[H]
\centering
\subfigure{\includegraphics[width=4cm]{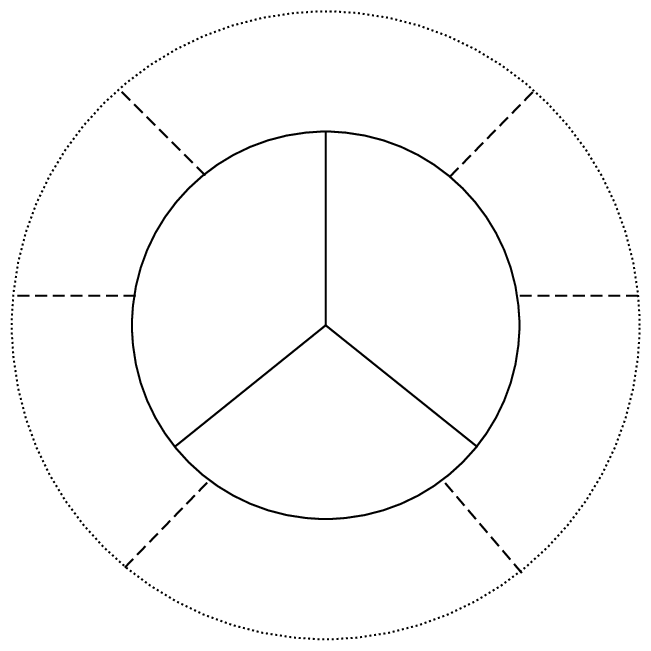}}
\subfigure{\includegraphics[width=4cm]{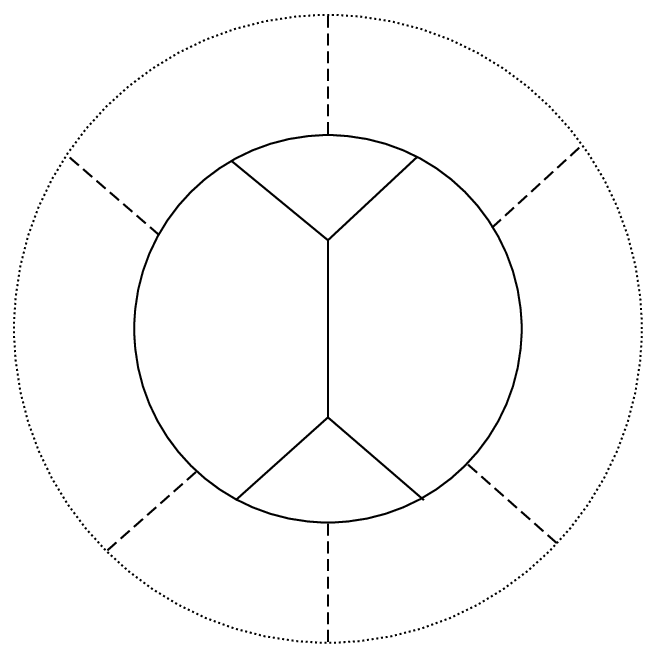}}
\caption{Signed graphs in $\mathcal{S}^*(3)$.}
\label{examples}
\end{figure}
In the following, we assume $(G, \sigma)$ is canonically embedded on the projective plane. The signature $\sigma$ is thus minimum, i.e., the number of negative edges equals the frustration index of $(G, \sigma)$. Note that $G$ is also a cubic graph (with $k$ negative edges).

For $(G,\sigma)\in \mathcal{S}^*(k)$ with $E^-_\sigma(G)=\{x_1y_1,\ldots, x_ky_k\}$, we use $\mathcal{R}$ to denote the set $\{x_1,\ldots, x_k, y_1,\ldots, y_k\}$. The plane cubic graph   in Theorem \ref{basis} is denoted by $G^\prime$, and we use $C'$ to denote the boundary cycle of $G^\prime$,  which is   subdivided by vertices in $\mathcal{R}$ to obtain the boundary cycle $C$ of the cross cap in the embedding of $G$. For a vertex subset $X\subseteq V(G)$, let $X^\prime=X\cap V(G^\prime)$. For $e\in E(C')$,   the \textit{weight} $\omega(e)$ of $e$ is the number of vertices in $\mathcal{R}$ contained in $e$, i.e., $e$ is subdivided into a path of length $\omega(e)+1$ in $G$. For a subgraph $H$ of $G^\prime$, the weight of $H$ is defined as $\omega(H)=\sum\limits_{e\in E(H)\cap C'}\omega(e)$.

For a face $F$ of $G^\prime$, we use $C_F$ to denote the cycle bounding it. A face $F$ of $G^\prime$ is called a \textit{boundary face} if $C_F$ shares an edge with $C'$, and is called an \textit{internal face} otherwise. A boundary face $F$ is called a \textit{bridge face} if the subgraph induced by $C_F \cap C'$ is disconnected. Since $G^\prime$ is cubic, every connected component of $C_F \cap C'$ is a single edge of $C'$ possibly  subdivided by some vertices in $\mathcal{R}$.  

For an edge $e\in E(G)$, when we say $e$ is contained in an equilibrated cut $\partial_G(X)$, we assume the cut is minimized. This implies that both $G[X]$ and $G[X^c]$ are connected.

 \begin{observation}\label{obs-1}
     If $\partial(X)$ is an equilibrated cut  of $(G, \sigma)$ and $G[X]$ is connected, then $G^\prime[X^\prime]$ is also connected.
 \end{observation}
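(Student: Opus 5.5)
The proof reduces connectivity of $G^\prime[X^\prime]$ to connectivity of $G[X]$ by comparing the two graphs locally. By Theorem \ref{basis}, $G$ is obtained from $G^\prime$ in two ways: some edges of $C'$ are subdivided by the vertices of $\mathcal{R}$, and the $k$ negative edges $x_iy_i$ are added between vertices of $\mathcal{R}$. So $V(G)=V(G^\prime)\cup\mathcal{R}$. Every vertex of $\mathcal{R}$ has degree $3$ in $G$: two of its edges lie on $C$, and the third is its negative edge.

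Take two vertices $u,v\in X^\prime$. Since $G[X]$ is connected, there is a path $P$ in $G[X]$ from $u$ to $v$. We decompose $P$ at its vertices in $V(G^\prime)$. Each resulting segment has the form $a, r_1,\dots,r_m, b$, where $a,b\in X^\prime$ and the internal vertices $r_j$ lie in $\mathcal{R}\cap X$.

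\textbf{Segments without negative edges.} If such a segment uses only positive edges, it runs along a subdivided edge $e$ of $C'$. Then $a$ and $b$ are the two ends of $e$, and $ab=e$ is an edge of $G^\prime[X^\prime]$. Such segments are therefore harmless.

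\textbf{Segments with negative edges: the main obstacle.} A segment may jump across the cross cap through a negative edge $x_iy_i$. Here $G[X]$ can stay connected only thanks to negative edges, while $G^\prime[X^\prime]$ might still be disconnected. The equilibrated hypothesis must rule this out.

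Suppose for contradiction that $G^\prime[X^\prime]$ has at least two components. Then I would exhibit a proper subset $Y\subsetneq X$ such that $\partial(Y)$, or a combination of $\partial(Y)$ and $\partial(X\setminus Y)$, contradicts minimality of $\sigma$. The natural choice of $Y$ is one component $K$ of $G^\prime[X^\prime]$, together with the $\mathcal{R}$-vertices of $X$ lying on subdivided edges with an end in $K$. A vertex of $\mathcal{R}$ on an edge with both ends in different components cannot occur: its two $C$-neighbours along the path would make those components adjacent in $G^\prime$.

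With $Y$ defined this way, every edge between $Y$ and $X\setminus Y$ is negative. Since $\partial(X)$ is equilibrated, $d^+(X)=d^-(X)$. Split
\[
d^{\pm}(X)=d^{\pm}_{Y}+d^{\pm}_{X\setminus Y},
\]
where $d^{\pm}_{Y}$ counts the edges of $\partial(X)$ with an end in $Y$, and similarly for $X\setminus Y$. Let $t\ge 1$ be the number of negative edges between $Y$ and $X\setminus Y$. Then
\[
d^-(Y)+d^-(X\setminus Y)=d^-(X)+2t, \qquad d^+(Y)+d^+(X\setminus Y)=d^+(X).
\]
Summing,
\[
\bigl(d^-(Y)-d^+(Y)\bigr)+\bigl(d^-(X\setminus Y)-d^+(X\setminus Y)\bigr)=2t>0.
\]
Hence one of $\partial(Y)$ and $\partial(X\setminus Y)$ has more negative than positive edges. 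Switching at that cut lowers the number of negative edges, which contradicts the fact that $\sigma$ is a minimum signature.

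It remains to check that $Y$ is nonempty and proper, i.e.\ that both $X^\prime\cap Y$ and $X^\prime\setminus Y$ are nonempty. Nonemptiness of $X^\prime$ itself also needs an argument. If $X\subseteq\mathcal{R}$, then $X$ induces a connected graph whose edges are negative edges and segments of $C$. A one-line count of the $3|X|$ edge-ends gives that the boundary has too many negative edges, except in the trivial case where $\partial(X)$ could not be equilibrated and minimal.
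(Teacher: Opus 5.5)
Your argument is correct and is essentially the paper's: split $X$ along the components of $G^\prime[X^\prime]$, note that only negative edges run between the parts, and use $d^+(X)=d^-(X)$ to find a part whose boundary has more negative than positive edges, which contradicts minimality of $\sigma$. Your two-part split with an explicit placement of the $\mathcal{R}$-vertices is, if anything, more careful than the paper's version.

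Your final paragraph is unnecessary. Under the contradiction hypothesis, $G^\prime[X^\prime]$ already has two nonempty components, one in $Y$ and one in $X\setminus Y$. Its claim is also false: two $\mathcal{R}$-vertices adjacent on $C$ give an equilibrated $(2,2)$-cut with $X^\prime=\emptyset$, which is the $K_2$ case of Corollary \ref{2+2}.
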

 \begin{proof}
 
 Assume to the contrary that $G^\prime[X^\prime]$ consists of $s(s \ge 2)$ connected components $ G^\prime[X^\prime_1], \cdots, G^\prime[X^\prime_s]$. These components are joint together by negative edges, hence $ d^-(X) < \sum\limits_{i=1}^s d^-(X_i)$. On the other hand, $\sum\limits_{i=1}^s d^+(X_i) = d^+(X)$ as there is no positive edges between different parts, 
  and $d^+(X)=d^-(X)$ as $\partial(X)$ is an equilibrated cut. So $\sum\limits_{i=1}^s d^+(X_i) < \sum\limits_{i=1}^s d^-(X_i)$ and there exists some $i$ such that $d^+(X_i)<d^-(X_i)$, contrary to the premise that the signature $\sigma$ is minimum.
 \end{proof}
 
By Observation \ref{obs-1}, in the drawing of a canonical embedding of $G$, an equilibrated cut can be embodied with a solid curve $\gamma$ in color red or blue on the plane that  traverses  a sequence  of positive edges and faces, with exactly two edges on $C'$, and splits $G'$ into two parts.  We denote a cut by this sequence consisting of faces and positive edges.
 For example,  $B=[e_1F_1e_2\ldots F_{r-1}e_r]$ represents an equilibrated cut formed by $r$ positive edges,  where $e_1, e_2 \in E(C' )$ and for $i=1,\ldots,r-1$, $e_i$ and $e_{i+1}$ are on $C_{F_i}$. 
 We say an equilibrated cut contains face $F$ (or edge $e$) if $F$ (or $e$) is contained in the sequence representing the cut. For two elements $x,y$ in the sequence of $B$, we denote by $B[x,y]$ the subsequence from $x$ to $y$. For example, for the cut $B=[e_1F_1e_2\ldots F_{r-1}e_r]$, $ B[e_1,e_3]=[e_1F_1e_2F_2e_3]$, $B[F_1, e_3] = [F_1e_2F_2e_3]$ and $B[F_1,F_2]=[F_1e_2F_2]$. 

\begin{observation}\label{matching}
For any equilibrated cut $\partial_G(X)$, the positive edges in $\partial_G(X)$ form a matching.   
\end{observation}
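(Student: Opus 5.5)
The plan is a local exchange argument at a single vertex, using only that $\sigma$ is a minimum signature and that every vertex of $G$ has degree $3$. Let $\partial_G(X)$ be an equilibrated cut, so $d^+(X)=d^-(X)$. Suppose, for contradiction, that two positive edges of $\partial_G(X)$ share an endpoint $v$. I will show that switching at the single vertex $v$ produces an equivalent signature with fewer negative edges, which is impossible.

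First I check that $v$ is a vertex of $G'$ of degree $3$. Every vertex of $G$ has degree $3$: vertices of $G'$ have degree $3$ in the cubic graph $G'$, and each inserted vertex $z_i\in\mathcal{R}$ has two positive edges along $C$ and one negative edge. The vertices of $G'$ are incident only to positive edges. An inserted vertex has exactly two positive edges, both on $C$. If both lay in $\partial_G(X)$, then $z_i$ would be the only vertex of its side on that stretch of $C$. This needs a small argument, but is in any case harmless, because the switching argument below works for any degree-$3$ vertex.

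Now let $v$ have edges $e_1,e_2,e_3$, with $e_1,e_2\in\partial_G(X)$ positive, and by symmetry assume $v\in X$. Since $\sigma$ is minimum, switching at $\partial(\{v\})$ cannot decrease the number of negative edges. Hence at most one of $e_1,e_2,e_3$ is negative, since otherwise switching at $v$ strictly lowers the count. We already have two positive edges at $v$, so this gives no contradiction by itself, and I use the cut.

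Put $Y=X\setminus\{v\}$ and compare $\partial(Y)$ with $\partial(X)$. Moving $v$ to the other side removes $e_1,e_2$ from the cut and toggles whether $e_3$ is in the cut.
\begin{itemize}
\item If $e_3$ is positive, then $d^+(Y)\le d^+(X)-2+1<d^-(X)=d^-(Y)$.
\item If $e_3$ is negative and $e_3\notin\partial(X)$, then $e_3$ enters the cut, so $d^+(Y)=d^+(X)-2<d^-(X)+1=d^-(Y)$.
\item If $e_3$ is negative and $e_3\in\partial(X)$, then $d^+(Y)=d^+(X)-2<d^-(X)-1=d^-(Y)$.
\end{itemize}
In every case $\partial(Y)$ has more negative than positive edges. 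Switching at $\partial(Y)$ therefore reduces the number of negative edges, contradicting minimality of $\sigma$ (the remark that $a\ge b$ for every $(a,b)$-cut). If $Y=\emptyset$, the same count applies to $X=\{v\}$ itself: $\partial(\{v\})$ is equilibrated with $3$ edges, which is impossible since $3$ is odd. Hence the positive edges of $\partial_G(X)$ are pairwise disjoint, that is, they form a matching.

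The only step needing care is the case analysis on $e_3$ and the degenerate case $X=\{v\}$. Both are handled by the computation above, so I expect no real obstacle.
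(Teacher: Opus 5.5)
Your proof is correct and is essentially the paper's argument: the paper switches at $\partial_G(X)$ and then at the vertex $w$, which amounts to switching at $\partial_G(X\setminus\{w\})$. Your three-case count spells out why the number of negative edges changes by $-2\pm 1<0$ because $d_G(w)=3$; the digression about whether $v\in\mathcal{R}$ is unnecessary, as you note yourself.
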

\begin{proof}
Assume to the contrary that an equilibrated cut $\partial_G(X)$ contains two edges incident to a vertex $w$. After switching at the cut and then switching at $w$, the resulting signature contains at most $k-1$ negative edges (since $d_G(w)=3$), contrary to the premise that $\sigma$ is minimum.
\end{proof}

The following lemma was proved in \cite{25}. 
\begin{lemma}\label{weight}
  If $k\geq 3$, $(G,\sigma) \in \mathcal{S}^*(k)$ and $E^-_\sigma(G)=\{x_1y_1,\ldots, x_ky_k\}$, then the following claims hold:

\begin{enumerate}
    \item $\omega(F)\leq 2$ for any boundary face $F$, and  $\omega(F)=0$ if  $F$ is a bridge face. 
    \item  If $F_1$ and $F_2$ are boundary faces whose edges on $C'$ are consecutive, then $\omega(F_1)+\omega(F_2)\leq 3$.
    \item  When $F$ is a bridge face, for any connected component $A_F$ of $G'\setminus(E(C_F)\cap E(C'))$, $\omega(A_F)\in \{2, 2k-2\}$.
\end{enumerate} 
\end{lemma}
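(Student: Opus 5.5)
The plan is to argue by contradiction throughout, using three facts. The first is minimality of $\sigma$: every cut $\partial_G(X)$ must satisfy $d^+(X)\ge d^-(X)$. The second is primality: there are no two edge-disjoint negative cycles, equivalently no weak $2$-linkage in $G^+$ for any two pairs $\{x_i,y_i\}$, $\{x_j,y_j\}$. The third is the structure from Theorem \ref{basis}: the vertices $z_1,\dots,z_{2k}$ lie in cyclic order on $C$ with $z_i$ joined to $z_{k+i}$, and $G$ is essentially 4-edge-connected.

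The basic counting fact is the following. Take an arc $P$ of $C$ containing $m$ consecutive vertices of $\mathcal{R}$. The number of negative edges with exactly one end in $P$ is $\min(m,2k-m)$, because antipodal pairs $z_i,z_{k+i}$ are split unless the whole pair lies in $P$.

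I would prove claim 3 first, since it is the cleanest instance of this counting. Let $F$ be a bridge face and $A_F$ a component of $G'\setminus(E(C_F)\cap E(C'))$. Put $X=V(A_F)$ together with the vertices of $\mathcal{R}$ subdividing edges of $C'$ inside $A_F$. Because $G'$ is cubic and $F$ meets $A_F$ along a single path of $C_F$, the positive edges leaving $X$ are exactly the two edges of $C'$ joining $A_F$ to the rest. The negative edges leaving $X$ number $\min(\omega(A_F),2k-\omega(A_F))$. Minimality therefore forces $\min(\omega(A_F),2k-\omega(A_F))\le 2$. Essential 4-edge-connectivity then excludes the values $0,1$ and $2k-1,2k$, since these give a nontrivial cut with at most $3$ edges separating two sides that each contain a cycle. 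This leaves $\omega(A_F)\in\{2,2k-2\}$.

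For claim 1, the analogous cut around the interior vertices of a single edge $e$ of $C'$ gives $\omega(e)\le 2$, but a whole face needs primality. Suppose a boundary face $F$ carries at least three vertices $z_a,z_b,z_c$ of $\mathcal{R}$. Among their three negative edges, two have interleaving endpoint pairs, say $\{z_a,z_{a+k}\}$ and $\{z_b,z_{b+k}\}$. I would close one of them into a cycle using the arc of $C$ on one side. The other I would close using the path of $C_F$ that runs through the interior of $G'$, together with the complementary arc of $C$. Since $C_F\cap C'$ consists only of edges on $F$, these two paths can be chosen edge-disjoint in $G^+$, giving two edge-disjoint negative cycles and contradicting primality. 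If $F$ is a bridge face with $\omega(F)>0$, I would merge the component $A_F$ containing the offending vertex with the arc of $F$ and recount. Using claim 3, the resulting set $X$ has $d^+(X)=2$ but $d^-(X)\ge 3$, contradicting minimality.

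Claim 2 is a combination of these two arguments. If $F_1$ and $F_2$ meet $C'$ on consecutive edges and $\omega(F_1)+\omega(F_2)\ge 4$, then the shared vertex of $C'$ and the edge $C_{F_1}\cap C_{F_2}$ give an interior path. I would use this path either to build two edge-disjoint negative cycles as above, or to exhibit a cut crossing $C'$ twice with three positive edges but at least four negative edges.

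I expect the main obstacle to be the edge-disjointness bookkeeping in claims 1 and 2. One must check that the interior path through $C_F$, or through $C_{F_1}\cup C_{F_2}$, really avoids the arc of $C$ used by the other cycle in every placement of the $\mathcal{R}$-vertices. Where it does not, the fallback is to turn the configuration into a cut-counting contradiction instead, using Observation \ref{matching} and the fact that every positive edge lies in an equilibrated cut (Theorem \ref{cut}).
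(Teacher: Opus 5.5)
The paper gives no proof of this lemma; it is quoted from \cite{25}. So I judge your plan on its own merits.

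\textbf{Claim 3.} Your argument is correct. The set $X$ you describe has $d^+(X)=2$ and $d^-(X)=\min(\omega(A_F),2k-\omega(A_F))$. Minimality then gives $\omega(A_F)\le 2$ or $\omega(A_F)\ge 2k-2$, and essential $4$-edge-connectivity removes $0,1,2k-1,2k$.

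\textbf{The gap: primality in claims 1 and 2.} Primality can never give a contradiction in this setting. In the canonical embedding, a cycle is negative exactly when it crosses the cross cap an odd number of times, i.e.\ when it is one-sided in the projective plane. Any two one-sided closed curves meet. Since $G$ is cubic, two cycles through a common vertex share an edge.

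Concretely, any two negative edges $z_az_{a+k}$ and $z_bz_{b+k}$ interleave on the outer face of the plane graph $G^+$. So your two closing paths must meet at a vertex of degree at most $3$ (the $\mathcal{R}$-vertices even have degree $2$ in $G^+$), and this forces a common edge. Hence every graph of the form in Theorem~\ref{basis} has no two edge-disjoint negative cycles, whatever the weights. Your step ``these two paths can be chosen edge-disjoint in $G^+$'' fails in every configuration. The unspecified fallback would therefore have to carry the whole of claim 1 for non-bridge faces and the first branch of claim 2.

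\textbf{What works instead: minimality alone.}

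\emph{Non-bridge faces.} Because $G'$ is cubic, a non-bridge boundary face meets $C'$ in a single edge $e$, so $\omega(F)=\omega(e)$. If $\omega(e)\ge 3$, three consecutive $\mathcal{R}$-vertices on $e$ span a $(2,3)$-cut, since for $k\ge 3$ no two of them are partners. Your cut around all interior vertices of $e$ only gives $\min(\omega(e),2k-\omega(e))\le 2$, which does not exclude $\omega(e)\ge 2k-2$.

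\emph{Bridge faces.} Suppose $\omega(e_i)>0$. Claim 3 and the total weight $2k$ force both components next to $e_i$ to have weight $2$. Adding exactly one $\mathcal{R}$-vertex of $e_i$ to one of them gives a $(2,3)$-cut. Adding all of them can give only $\min(4,2k-4)=2$ negative edges when $k=3$, so ``merge with the arc'' needs this care.

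\emph{Claim 2.} By claim 1, both faces must be non-bridge with $\omega(e_1)=\omega(e_2)=2$. Let $w$ be the common end of $e_1,e_2$. Then $X=\{w\}$ together with the four $\mathcal{R}$-vertices on $e_1\cup e_2$ has $d^+(X)=3$ and $d^-(X)=\min(4,2k-4)$. This contradicts minimality exactly when $k\ge 4$.

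\textbf{Claim 2 fails for $k=3$.} Your phrase ``at least four negative edges'' silently assumes $k\ge 4$, and no argument can cover $k=3$. Put two $\mathcal{R}$-vertices on each edge of the outer triangle of $G'=K_4$. This yields the Petersen graph with three pairwise crossing negative edges. Every cut has $d^+\ge d^-$, and every positive edge lies in a $(2,2)$- or $(3,3)$-cut, so it is critically $3$-frustrated. It is prime by the one-sidedness remark above. Yet two consecutive boundary faces have total weight $4$. So claim 2 holds only for $k\ge 4$, which is the only range the paper actually uses.
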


\begin{proposition}\label{bridgeface}
Assume $k\geq 4$ and $F$ is a bridge face. Then $G^\prime\setminus(E(C_F)\cap E(C'))$ has exactly two components $A_1$ and $A_2$, such that  $A_1$ is a single face bounded by a copy of $C_4$ in $G$ and $\omega(A_2)=2k-2$.
Moreover, the length of $C_F$ is at least 6.
\end{proposition}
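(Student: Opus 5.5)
The argument has three parts: count the components and their weights, show the weight-$2$ component is a single face bounded by a $4$-cycle in $G$, then bound the length of $C_F$.

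\textbf{Counting the components.} Let $F$ be a bridge face. Let the components of $C_F\cap C'$ be the edges $f_1,\dots,f_s$ with $s\ge 2$, so $\omega(f_i)=0$ by Lemma \ref{weight}(1). Deleting these $s$ edges from the $2$-connected plane graph $G'$ splits it into exactly $s$ components $A_1,\dots,A_s$. Each $A_j$ sits between two consecutive arcs $f_i,f_{i+1}$ of $C'$ on one side of $C_F$, and each carries part of $C'$. Since the total weight is $\omega(G')=2k$, we get $\sum_j\omega(A_j)=2k$.

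By Lemma \ref{weight}(3), each $\omega(A_j)\in\{2,2k-2\}$. With $k\ge 4$ we have $2k-2\ge 6>2$, so the only solutions are $s=2$ with weights $\{2,2k-2\}$, or $s=k$ with all weights equal to $2$.

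To exclude $s=k\ge 4$, take the vertex sets $X_j=V(A_j)$ together with their $\mathcal R$-vertices. Because all $f_i$ have weight $0$, the positive edges leaving $A_j$ are just the two edges $f_{j-1},f_j$ and the edges of $C_F$ that leave $A_j$. I would count these edges against the negative edges leaving $X_j$, exploiting the cyclic order $z_1,\dots,z_{2k}$. In that order, the two $\mathcal R$-vertices in $A_j$ are consecutive, so they are not partners ($z_i$ and $z_{k+i}$ are antipodal). Hence each $A_j$ sends exactly two negative edges out. Summing $d^+(X_j)\ge d^-(X_j)$ and using minimality of $\sigma$ should give a contradiction. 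An alternative is to use primality directly: the chord structure then contains two edge-disjoint negative cycles. So $s=2$, with $\omega(A_1)=2$ and $\omega(A_2)=2k-2$.

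\textbf{Structure of $A_1$.} Let $P=A_1\cap C'$, which carries the two $\mathcal R$-vertices. Suppose $A_1$ contains an internal face, or some face of $A_1$ other than one adjacent to $P$. Then I would find a small edge cut around $A_1$, namely the $f$-edges plus the edges of $C_F$ leaving $A_1$. That cut either contradicts the essential $4$-edge-connectivity of Theorem \ref{basis}, or it shows $(G,\sigma)$ is reducible: if $A_1$ is more than a single edge joining the two $f$-edges, it could be replaced by a subdivided multiedge.

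More concretely, the two negative edges at the $\mathcal R$-vertices of $A_1$ together with $f_1$, $f_2$ give a $4$-edge cut in $G$. Essential $4$-edge-connectivity then forces the side containing $A_1$ to be trivial. In the embedding, the smallest nontrivial such side is a single face whose boundary in $G$ is a $4$-cycle: the two subdivision vertices on $P$ plus two vertices of $C_F$. Thus $A_1$ is a single face bounded by a copy of $C_4$ in $G$. This step is the main obstacle, since it requires pinning down exactly which small cuts are permitted.

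\textbf{Length of $C_F$.} $C_F$ contains $f_1$ and $f_2$, the edge shared with the $C_4$-face $A_1$, and a path through $A_2$ joining the ends of $f_1$ and $f_2$ on that side. If this path had length at most $2$, then $|C_F|\le 5$, so $C_F$ has length $4$ or $5$ (it cannot be $3$, since the cycle must contain $f_1$, $f_2$, an edge of $A_1$ and at least one edge of $A_2$). In that case we get a $3$- or $4$-edge cut separating a small nontrivial piece, $A_1\cup F$, from the rest. That cut would violate essential $4$-edge-connectivity, or would let us exhibit an equilibrated cut containing two adjacent positive edges, contradicting Observation \ref{matching}. Therefore $|C_F|\ge 6$.
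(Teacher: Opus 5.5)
Your proposal has a genuine gap at its core. The second and third parts apply essential $4$-edge-connectivity to cuts with four or more edges, but that property (Theorem~\ref{basis}) only says that edge cuts with at most three edges are trivial. The cut $\partial_G(V(A_1))$ consists of $f_1$, $f_2$ and the negative edges at the two $\mathcal{R}$-vertices $x_1,x_2$ of $A_1$. It is just a $(2,2)$-cut with four edges, and nothing you cite prevents $A_1$ from being large. The reducibility remark does not help either, since a piece attached by two positive and two negative edges is not a subdivided multiedge.

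The missing ingredient is criticality, in the form of Theorem~\ref{cut}(3): every positive edge lies in an equilibrated cut. The paper applies this, together with a rerouting trick, to the edges $e_1',e_2'$ of $C$ in $A_1$ next to $f_1,f_2$, and to an arbitrary edge $e_3$ of $C$ between $x_1$ and $x_2$.
An equilibrated cut through $e_3$ must traverse $F$; otherwise it cuts off a piece of $A_1$ with at most one negative but at least two positive edges. So it also meets $A_2\cap C'$.
An equilibrated cut $B$ through $e_1'$ cannot traverse $F$. Replacing $B[F,e_1']$ by $[Ff_1]$ would remove positive edges without changing the negative ones, contradicting minimality of $\sigma$.
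Hence $e_1'$ and $e_2'$ lie only in $(2,2)$-cuts inside $A_1$, and by planarity this forces a single face $F'$ containing both. Rerouting a cut through $e_3$ to $[Ff_2]$ then shows $e_3\in C_{F'}$. Since $F'$ is not a bridge face, $A_1$ is the $C_4$.

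The length bound has the same defect. For $|C_F|=4$, the cut around $A_1\cup F$ is a $(2,2)$-cut with both sides nontrivial, which violates neither essential $4$-edge-connectivity nor minimality. You also never identify an equilibrated cut with two adjacent positive edges. The paper produces one explicitly. Take an equilibrated cut $K$ through $x_1x_2$; it traverses $F$ and leaves through an edge of $C_F\cap A_2$. For $|C_F|\le 5$ that edge is adjacent to $f_1$ or $f_2$, say $f_1$. Replacing $K[F,x_1x_2]$ by $[Ff_1]$ removes one positive edge net and changes the number of negative edges by at most one. By minimality the new cut is again equilibrated, yet its positive edges do not form a matching, contradicting Observation~\ref{matching}.

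There is also a smaller slip in the first part. Summing $d^+(X_j)\ge d^-(X_j)$ over single components gives no contradiction, because each $V(A_j)$ yields a $(2,2)$-cut (only the $f$-edges leave $A_j$). You need the union of two consecutive components. Its cut has two positive but four negative edges, since four consecutive vertices of $\mathcal{R}$ contain no partner pair when $k\ge 4$.
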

\begin{proof}
Assume $A_1, A_2,\ldots, A_s$ are the connected components of $G^\prime\setminus(E(C_F)\cap E(C'))$, in this cyclic order. By Lemma \ref{weight}, each connected component has weight $2$ or $2k-2$. If $s > 2$,  then as the total weight is $2k$, we have $s=k$, and each $\omega(A_i)=2$ for every $i\in [s]$. Then the cut $\partial_G(A_1 \cup A_2)$ is a $(2,4)$-cut in $(G,\sigma)$, a contradiction (see Figure \ref{bridge faces}). Thus $s=2$, and  we may assume that $\omega(A_1)=2$ and $\omega(A_2)=2k-2$.

\begin{figure}[H]
\centering
\subfigure{\includegraphics[width=9cm]{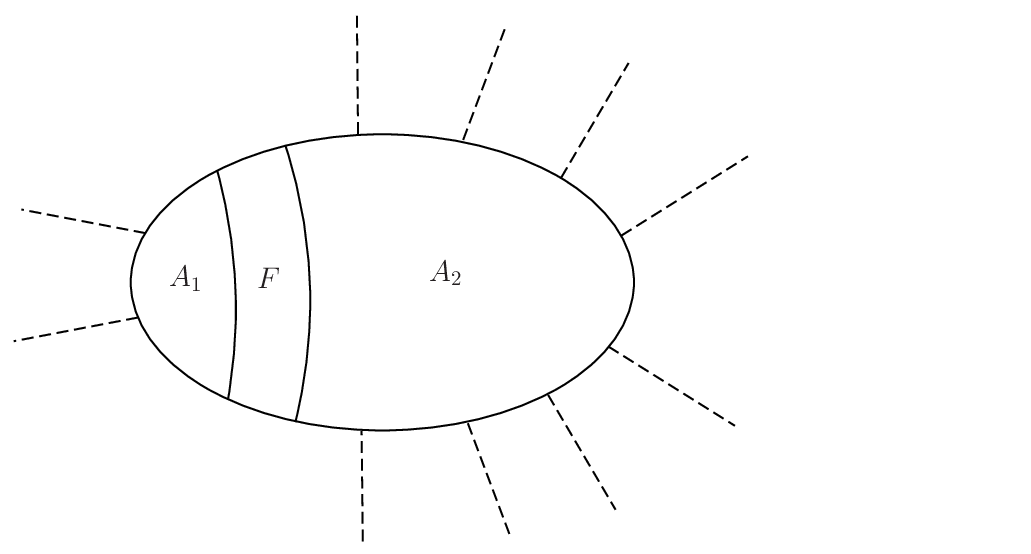}}
\subfigure{\includegraphics[width=8cm]{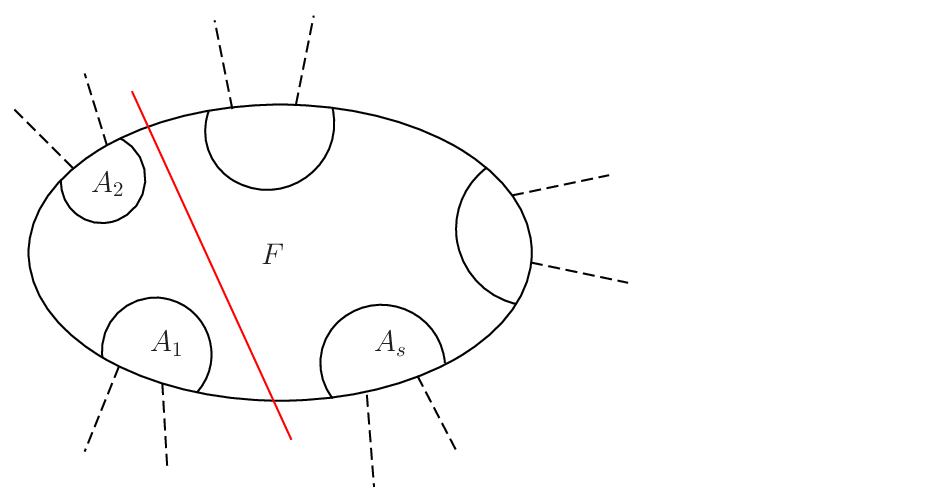}}
\caption{Two types of bridge faces.}
\label{bridge faces}
\end{figure}

Let $E(C_F)\cap E(C') = \{e_1, e_2\}$, and $V(A_1)\cap \mathcal{R} =\{x_1, x_2\}$. Let $e_1^\prime$ and $e_2^\prime$ be the two unweighted edges in $E(G)$ of $E(A_1)\cap E(C)$ adjacent to $e_1$ and $e_2$ respectively. Let $e_3$ be an arbitrary edge on $C$ between $x_1$ and $x_2$.

\begin{claim}\label{clm-0}
If $B$ is an equilibrated cut containing $e_3$, then $B$ contains $F$ as well as an edge in $E(A_2)\cap C’$; if $B$ is an equilibrated cut containing $e'_1$ (or $e'_2$), then $B$ does not contain $F$. 
\end{claim}
\begin{proof}
 Assume $B$ is an equilibrated cut containing $e_3$. Let $Y_1,Y_2$ be the two parts separated by $B$. If $B$ does not contain $F$, then one of $Y_1$ and $Y_2$, say $Y_1$, is contained in $V(A_1)$. $B$ does contain negative edges, so $x_1$ and $x_2$ are separated by $B$. As a result, $B=\partial (Y_1)$ consists of one negative edge and at least two positive edges, contrary to the premise that $B$ is equilibrated. So $B$ contains $F$, and hence contains an edge in $E(A_2)\cap C’$. 

 Assume $B$ is an equilibrated cut containing $e'_1$. If $B$ contains $F$, then we replace the subsequence $B[F,e_1^\prime]$ with $[Fe_1]$, see Figure \ref{22}, where $B$ is represented by a solid red curve $\gamma$. To show the replacement, we let $\gamma$ veer to $e_1$ when it traverses $F$. The resulting sequence is a cut $B'$ with less positive edges, but with the same number of negative edges, a contradiction. Thus $B$ does not contain $F$. The case $B$ contains $e'_2$ is symmetric.
\end{proof}

\begin{figure}[H]
\centering
\includegraphics[width=10cm]{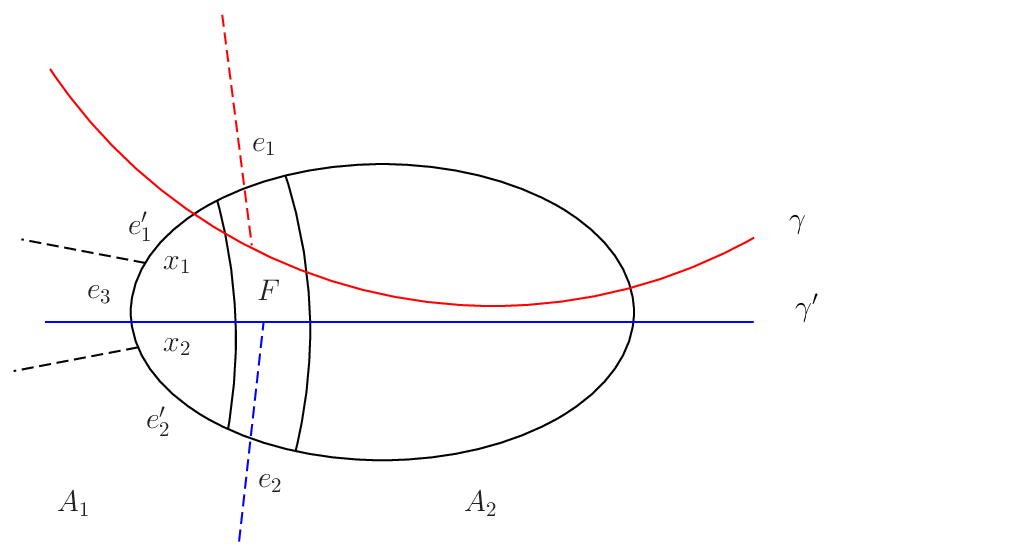}
\caption{}
\label{22}
\end{figure}

It follows from Claim \ref{clm-0} that  for $i \in \{1,2\}$, $e_i^\prime$ can only be contained in a $(2,2)$-cut including $x_1y_1$, $x_2y_2$ as well as an edge in $E(A_1)\cap C'$. By planarity, the only possible case is that there is a face $F^\prime$ whose boundary cycle $C_{F^\prime}$ contains both $e_1^\prime$ and $e_2^\prime$.

The edge $e_3$ must be on $C_{F^\prime}$ too, for otherwise, take an arbitrary equilibrated cut $D$ containing $e_3$, where we use the solid blue curve $\gamma'$ to represent it in Figure \ref{22}. By Claim \ref{clm-0}, $D$ contains $F$, and we replace the subsequence $D[F, e_3]$ with $[Fe_2]$. Similarly, the replacement is shown in Figure \ref{22} by letting the solid blue curve $\gamma'$ representing $D$ veer to $e_2$ when it traverses $F$. Compared to $D$, the resulting cut $D'$ loses at least two positive edges and at most one negative edge, a contradiction too.
By the arbitrariness of $e_3$, all edges between $x_1$ and $x_2$ are on $C_{F^\prime}$. From Lemma \ref{weight}, $F^\prime$ cannot be a bridge face, hence $e_3=x_1x_2$ and $A_1$ in $G$ is a copy of $C_4$ which is the boundary of  $F^\prime$.

If the length of $C_F$ is less than 6,  then $|E(C_F)\cap E(A_2)|\leq 2$. Let $K$ be an equilibrated cut containing $e_3$, which also contains $F$. Then $K$ contains an edge of $E(C_F)\cap E(A_2)$ which is adjacent to $e_1$ or $e_2$, say $e_1$, and we replace $K[F, e_3]$ with $[Fe_1]$. The resulting cut $K'$ loses one positive edge and at most one negative edge, so $K'$ is also an equilibrated cut. However, the positive edges of $K'$ do not form a matching, contrary to Observation \ref{matching}. Thus $C_F$ has length at least $6$. This completes the proof of {Proposition} \ref{bridgeface}.
\end{proof}

Since the two positive edges in a $(2,2)$-equilibrated cut lie on a single boundary face, we have the following corollary.
\begin{corollary}\label{2+2}
Every $(2,2)$-cut has the form $\partial_G(X)$, where $G[X]\cong K_2$ or $G[X]$ is a single face bounded by a $4$-cycle.
\end{corollary}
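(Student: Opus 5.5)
Let $\partial_G(X)$ be a $(2,2)$-cut with $G[X]$ and $G[X^c]$ connected. By Observation \ref{obs-1}, $G'[X']$ and $G'[(X^c)']$ are connected as well. In the canonical embedding the cut is therefore represented by a curve $\gamma$ that enters the plane part through one edge of $C'$, crosses positive edges and faces, and leaves through another edge of $C'$. Since there are only two positive edges $f_1,f_2$ in the cut, both lying on $C'$ (after subdivision, on $C$), $\gamma$ passes through exactly one face, say $F$, of $G'$. Then $f_1$ and $f_2$ both lie on $C_F\cap C$, so $F$ is a boundary face.

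The plan is to split into cases according to whether $f_1,f_2$ lie on the same edge of $C'$ or on different edges.

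\emph{Same edge $e$ of $C'$.} The segment of $C$ between $f_1$ and $f_2$ consists of part of the subdivided edge $e$. The side $X$ cut off is then the set of subdivision vertices strictly between them, and these all lie in $\mathcal{R}$. Each such vertex carries one negative edge leaving $X$, so exactly two vertices are cut off. These two vertices are adjacent along $C$, so $G[X]\cong K_2$.

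\emph{Different edges.} Here $f_1,f_2$ lie on distinct components of $C_F\cap C'$, or on two edges of the same component. Since $G'$ is cubic, each component of $C_F\cap C'$ is a single edge, so the edges are distinct components and $F$ is a bridge face. By Lemma \ref{weight}(1) we get $\omega(F)=0$, and Proposition \ref{bridgeface} applies since $k\ge4$. The two positive edges are then exactly $e_1,e_2$, the two edges of $C_F\cap C'$ (they carry no weight, so each is a single edge of $G$). The side containing exactly two endpoints of negative edges is $A_1$, because $\omega(A_1)=2$ and $\omega(A_2)=2k-2>2$. By Proposition \ref{bridgeface}, $A_1$ is a single face bounded by a $4$-cycle in $G$. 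This gives the second form.

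The main obstacle is justifying that the cut's positive edges both lie on $C$ and that $\gamma$ crosses only one face. Both follow from the description after Observation \ref{obs-1}: the curve has exactly two edges on $C'$, and with only two positive edges in total, there is no interior positive edge. One must also rule out that the side with two $\mathcal{R}$-vertices is $A_2$ when $2k-2=2$; this is excluded because $k\ge 4$.
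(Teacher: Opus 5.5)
Your proof is correct and follows essentially the same route as the paper. The paper gets the corollary in one line from the fact that the two positive edges of a $(2,2)$-cut lie on a single boundary face; the implicit split is exactly yours: if the face is a bridge face, Proposition \ref{bridgeface} gives the $C_4$ face, and otherwise both edges lie on one subdivided edge of $C'$ and give $K_2$. (The one step you leave implicit is that each cut-off $\mathcal{R}$-vertex in the $K_2$ case has its negative edge leaving $X$; this holds because $x_i,y_i$ are the antipodal vertices $z_j,z_{k+j}$ while $\omega(e)\le 2$.)
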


The following proposition describes the structure of cuts that contain exactly three positive edges.
\begin{proposition}\label{3+1}
Let $\partial_G(X)$ be a cut containing exactly three positive edges, with $\omega(G[X])\leq \omega(G[X^c])$. 
\begin{enumerate}
\item If $\partial_G(X)$ is a $(3,1)$-cut, then $G[X]\cong K_2 $ or $G[X] \cong C_4$.

\item If $\partial_G(X)$ is a $(3,2)$-cut, then $G[X]$ contains at most one cycle.
\end{enumerate}
\end{proposition}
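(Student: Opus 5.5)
The plan is to count edges: bound the cut from $X$ using cubicity and then use the minimality of $\sigma$ to force $G[X]$ to be small. Let $H=G[X]$; as usual we take $H$ connected. Every vertex of $G$ has degree $3$, and each negative edge has both ends in $\mathcal{R}$. So
\[
3|X| = 2|E(H)| + d^+(X) + d^-(X).
\]
We also note two consequences of minimality. Switching at $X$ would change the number of negative edges by $d^+(X)-d^-(X)$ plus the effect on negative edges inside $H$. Hence every subset $Y\subseteq X$ satisfies $d^+(Y)\ge d^-(Y)$, and likewise every subset of $X^c$.

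\textbf{Part 1 ($(3,1)$-cut).} Here $d(X)=4$, which gives $|E(H)|=(3|X|-4)/2$ and cyclomatic number $|E(H)|-|X|+1=(|X|-2)/2$.
\begin{itemize}
\item If $|X|=2$, then $H\cong K_2$.
\item If $|X|\ge 4$, then $H$ contains a cycle.
\end{itemize}
Since $\omega(G[X])\le\omega(G[X^c])$ and there is only one negative edge in the cut, $\omega(H)$ is small. We use essential $4$-edge-connectivity (Theorem \ref{basis}): a $4$-edge cut with both sides containing cycles can only occur when one side is $C_4$-like. More precisely, $G$ is cubic and essentially $4$-edge-connected, and we need to rule out larger $H$. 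The main tool is criticality. Pick a positive edge $e$ in $H$ (or on the boundary of $H$); it must lie in an equilibrated cut $B$ (Theorem \ref{cut}). We then combine $B$ with $\partial(X)$ by submodularity:
\[
d^\pm(X\cap Y)+d^\pm(X\cup Y)\le d^\pm(X)+d^\pm(Y).
\]
Since $\partial(X)$ has surplus $d^+-d^-=2$ and $B$ has surplus $0$, one of $X\cap Y$ or $X\cup Y$ has surplus $\le 1$. We then derive a contradiction via Observation \ref{matching}, or via a $(2,2)$ structure forced by Corollary \ref{2+2}, unless $H$ is $K_2$ or a $4$-face. In the planar picture, the negative edge leaving $X$ forces $H$ to meet $C'$. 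If $H$ contains an internal face or more than one face, the equilibrated cut through an interior edge of $H$ can be rerouted along $\partial(X)$ to produce a cut with more negative than positive edges, or a non-matching cut. The remaining case, $|X|=4$ with $H$ having $4$ edges, is exactly $C_4$.

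\textbf{Part 2 ($(3,2)$-cut).} Now $d(X)=5$, so $|X|$ is odd and $|E(H)|=(3|X|-5)/2$, giving cyclomatic number $(|X|-3)/2$. It therefore suffices to show $|X|\le 5$. Suppose $H$ had two independent cycles. Using the planar embedding of $G'$ and Observation \ref{obs-1}, $H'$ would contain two faces, or a face together with an extra path. We then take an equilibrated cut through an edge separating these, and uncross it with $\partial(X)$. This yields either a cut with $d^+<d^-$, contradicting minimality, or an equilibrated cut whose positive edges share a vertex, contradicting Observation \ref{matching}. Lemma \ref{weight} and Proposition \ref{bridgeface} bound the weight available on boundary faces inside $H$. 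They are used to show that two negative ends plus at most three positive exits cannot surround two faces.

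\textbf{Main obstacle.} The hardest step will be the uncrossing argument that excludes a second cycle in Part 2 (and a larger $H$ in Part 1). It requires a careful case split on how the equilibrated cut $B$ crosses $\partial(X)$, in particular how many of $B$'s negative edges lie on each side. I would first attempt the pure submodularity count and fall back on the face-rerouting technique used in the proof of Proposition \ref{bridgeface} where the count is not tight.
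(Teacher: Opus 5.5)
\textbf{There is a genuine gap: the step that bounds $|X|$ is never proved, and it is the whole content of the proposition.}

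Your edge count is correct and gives a clean reduction. For connected $H$ the cyclomatic number is $(|X|-2)/2$ for a $(3,1)$-cut and $(|X|-3)/2$ for a $(3,2)$-cut. So, with essential $4$-edge-connectivity excluding a triangle when $|X|=4$, the statement amounts to $|X|\le 4$, resp.\ $|X|\le 5$. The tools you propose for this bound cannot deliver it:
\begin{itemize}
\item Essential $4$-edge-connectivity only says that edge cuts of size at most $3$ are trivial; it says nothing about $4$-edge cuts. The assertion that a $4$-edge cut with cycles on both sides has a $C_4$ side is Corollary \ref{4cut}, which the paper derives \emph{from} this proposition. Using it here would be circular.
\item The uncrossing step fails. $d^+$ and $d^-$ are each submodular, but the surplus $s=d^+-d^-$ is not. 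Writing $e^{\pm}(A,B)$ for the number of positive/negative edges between $A$ and $B$,
\[
s(X\cap Y)+s(X\cup Y)=s(X)+s(Y)-2\bigl(e^+(X\setminus Y,Y\setminus X)-e^-(X\setminus Y,Y\setminus X)\bigr),
\]
which exceeds $s(X)+s(Y)$ whenever negative edges dominate between $X\setminus Y$ and $Y\setminus X$.
\item Even a set of surplus at most $1$ would contradict nothing, since minimality of $\sigma$ only forbids negative surplus.
\end{itemize}

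What is missing is the concrete face-rerouting argument, and above all the choice of test edges. Write $\partial_G(X)$ as $[e_1F_1eF_2e_2]$, with $F_1,F_2$ adjacent boundary faces sharing $e$. If $e_1,e_2$ are adjacent on $C'$, then $G[X]$ is a path and you are done.

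The paper does not test an interior edge of $H$. It tests the edges $e_1',e_2'$ of $C$ inside $X$ next to $e_1,e_2$. In the $(3,1)$ case the argument runs as follows:
\begin{enumerate}
\item $X$ holds a single vertex of $\mathcal{R}$. So an equilibrated cut $D$ through $e_1'$ (which exists by Theorem \ref{cut}) cannot close up inside $X$, and must pass through $F_1$ or $F_2$.
\item Reroute the segment of $D$ from that face so that it exits at $e_1$ or $e_2$ instead. This loses more positive than negative edges, contradicting minimality of $\sigma$, unless some face through $e_1'$ is adjacent to $F_2$.
\item Symmetrically, some face through $e_2'$ is adjacent to $F_1$.
\item Planarity forces these two faces to coincide, and Lemma \ref{weight} rules out a bridge face. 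Hence $G[X]$ is a single $4$-face.
\end{enumerate}
The $(3,2)$ case runs the same way, with an extra test edge $e_3$ of $C$ between the two $\mathcal{R}$-vertices of $X$, and yields a single $5$-face.

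Your sentence about rerouting ``through an interior edge of $H$'' names the technique but supplies none of these ingredients. As written, the crux of the proposition is left unproved.
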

\begin{proof}

Denote the sequence of $\partial_G(X)$ by $B=[e_1F_1eF_2e_2]$, where $e_1$ and $e_2$ are weighted edges on $C’$, and $F_1$ and $F_2$ are two adjacent boundary faces whose common edge is $e=w_1w_2$, with $w_1\in X$ and $w_2\in X^c$. As $\omega(G[X])\leq \omega(G[X^c])$, $\omega(G[X])=1$ or $2$.

Assume first that $e_1$ and $e_2$ are adjacent on $C'$. Let $w$ be the common end vertex of $e_1$ and $e_2$. Then $w$ is incident to both $F_1$ and $F_2$, and $X - \mathcal{R}= \{w\}$. Hence $G[X]$ is a path consisting of $w$ and one or two vertices of $\mathcal{R}$. 

Assume next that $e_1$ and $e_2$ are not adjacent on $C'$. Let $e'_i \in E(G[X])$ be the unweighted edge of $C$ adjacent to $e_i$ for $i=1,2$.

\begin{figure}[H]
\centering
\includegraphics[width=11cm]{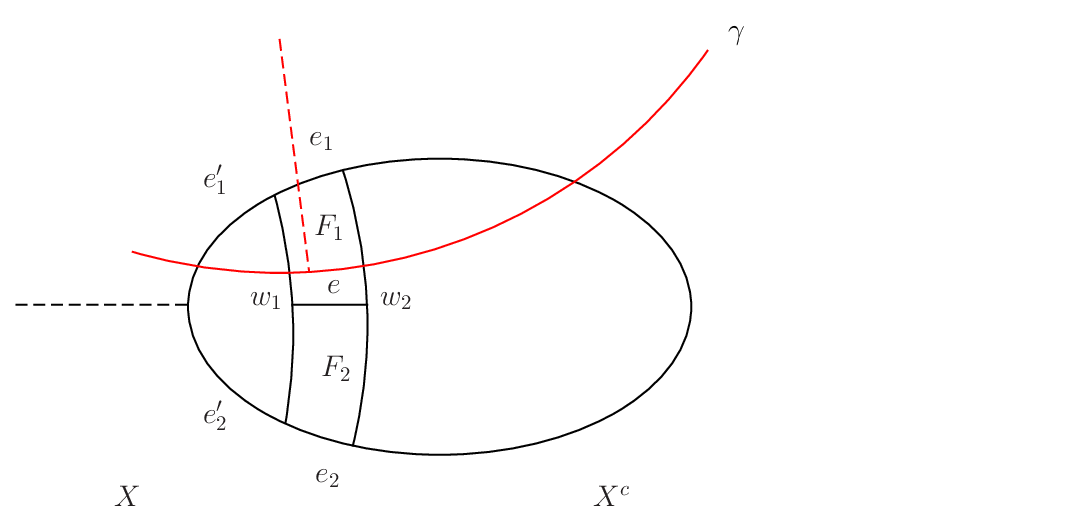}
\caption{}
\label{31}
\end{figure}

If $\partial_G(X)$ is a $(3,1)$-cut, then $\omega(G[X])=1$. Since $X$ contains only one vertex in $\mathcal{R}$, each equilibrated cut $D$ including $e_1^\prime$ contains $F_j$ for some $j\in \{1,2\}$. When $j=1$, we replace $D[F_1,e'_1]$ with $[F_1e_1]$, see Figure \ref{31}, where $D$ is shown with the solid red curve $\gamma$. Compared to $D$, the resulting cut $D'$ loses at least one positive edge but no negative edge, a contradiction. So $D$ contains $e'_1$ as well as $F_2$. 
It is implied that there exists a face $F_1^\prime$ adjacent to $F_2$ with $e_1^\prime\in C_{F_1^\prime}$, for otherwise we can replace $D[F_2,e'_1]$ with $[F_2e_1]$ and obtain a contradiction again.

Symmetrically, there exists a face $F_2^\prime$ adjacent to $F_1$ with $e_2^\prime\in C_{F_2^\prime}$. By planarity of $G^\prime$, $F^\prime_{1}=F^\prime_{2}$, and moreover, $F^\prime_{1}$ cannot be a bridge face by our descriptions in Lemma \ref{weight}. So $G[X]$ is exactly a single face bounded by a copy of $C_4$.  

If $\partial_G(X)$ is a $(3,2)$-cut, then $\omega(G[X])=2$. Let $X\cap\mathcal{R}= \{x_1, x_2\}$, and furthermore we may presume $x_1$ and $x_2$ are not on $e_1$ or $e_2$, for otherwise $G[X]$ is formed by a subgraph generating a $(3,1)$-cut with a vertex of degree one attached to it, and thus contains at most one cycle.
 
We arbitrarily select an edge on $C$ between $x_1$ and $x_2$ and denote it by $e_3$. 
For $i=1,2$, when an equilibrated cut including $e_i^\prime$ contains neither $F_1$ nor $F_2$, it can only be a $(2,2)$-cut with form $\partial_G(Y)$, where $Y\subseteq V(A_1)$. The two positive edges in $\partial_G(Y)$ are $e_i^\prime$ as well as another edge lying between $x_{3-i}$ and $e_{3-i}$, and there is a face connecting them. On the other hand, if an equilibrated cut containing $e_i^\prime$ does include $F_1$ or $F_2$ in its sequence, then it must be $F_{3-i}$, and there is a face linking $e_i$ to it. The reason is the same as the case $\partial_G(X)$ is a $(3,1)$-cut, and we omit the details here.

Now focus on $e_3$. Let $K$ be an equilibrated cut containing $e_3$. $K$ must include $F_1$ or $F_2$, and we let it be $F_1$ without loss of generality. Then there exists a face adjacent to $F_1$ whose boundary cycle contains $e_3$, for otherwise we can replace $K[F_1,e_3]$ with $[F_1e_1]$ and obtain a new sequence $K'$. $K'$ is a cut with a loss of at least two positive edges and at most one negative edge compared to the equilibrated cut $K$, which is a contradiction.

\begin{figure}[H]
\centering
\includegraphics[width=11cm]{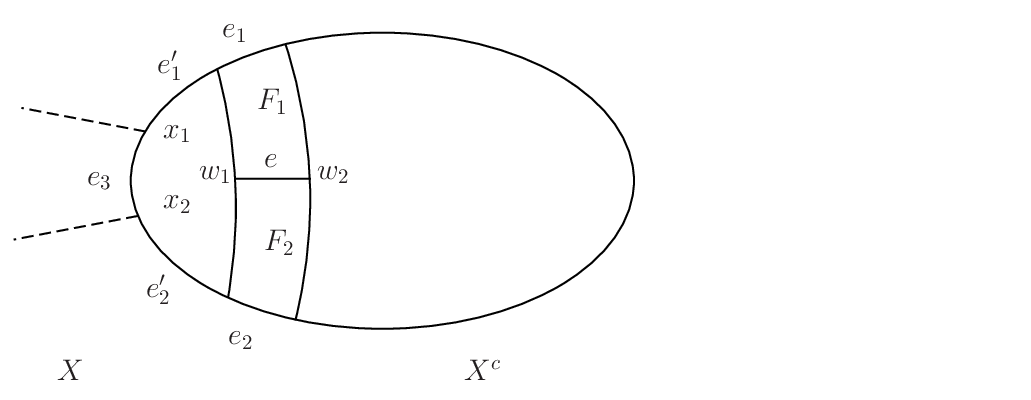}
\caption{}
\label{32}
\end{figure}

In the end, using $C_X$ to represent the cycle pieced together by $C$, $F_1$ and $F_2$ in $G[X]$, we make a summary of the appeals of $e_1^\prime$, $e_2^\prime$ and $e_3$. $e_1'$ demands a face $F^\prime_{1}$ linking $e'_1$ with some edge on $C_X$ between $x_2$ and $w_1$; $e_2'$ demands a face $F^\prime_{2}$ linking $e'_2$ with some edge on $C_X$ between $x_1$ and $w_1$; $e_3$ demands a face $F^\prime_{3}$ linking $e_3$ with an edge lying on the boundary of $F_1$ or $F_2$. By planarity, the unique solution to meet all of them is that $F^\prime_{1}=F^\prime_{2}=F^\prime_{3}$. By Lemma \ref{weight}, $F^\prime_{1}$ cannot be a bridge face, so $X$ is exactly a single face bounded by a $C_5$.
\end{proof}

We now have characterized the embodiment of some special small cuts in a certain canonical embedding of $(G, \sigma)$. Surprisingly, one can obtain a general property of the underlying graph $G$, which does not depend on how it is embedded on the projective plane.
\begin{corollary}\label{4cut}
Let $\partial_G(X)$ be a cut consisting of four edges, then $G[X]$ or $G[X^c]$ contains at most one cycle.
\end{corollary}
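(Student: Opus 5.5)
The idea is to reduce Corollary \ref{4cut} to Proposition \ref{3+1} and Corollary \ref{2+2}. We fix a canonical embedding of $(G,\sigma)$ with minimum signature $\sigma$. Let $\partial_G(X)$ be a cut with exactly four edges, and we may assume both $G[X]$ and $G[X^c]$ are connected. Otherwise we pass to a component: a component $Y$ of $G[X]$ gives a smaller cut. Since $G$ is essentially $4$-edge-connected (Theorem \ref{basis}) and cubic, any cut with fewer than four edges has a side that is a single vertex or otherwise acyclic, and one checks that the statement is inherited.

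Write $\partial_G(X)$ as an $(a,b)$-cut with $a+b=4$. Because $\sigma$ is minimum, $a\ge b$, so $(a,b)\in\{(4,0),(3,1),(2,2)\}$. We treat these three cases in turn.

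\textbf{Case $(2,2)$.} The cut is equilibrated, so Corollary \ref{2+2} applies directly. One side induces $K_2$ or a single face bounded by a $4$-cycle, and hence contains at most one cycle.

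\textbf{Case $(3,1)$.} Proposition \ref{3+1}(1), applied to the side of smaller weight, gives $G[X]\cong K_2$ or $C_4$. Again that side contains at most one cycle.

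\textbf{Case $(4,0)$.} No negative edge crosses the cut, so every negative edge has both ends on one side. By Observation \ref{obs-1}-type connectivity reasoning, the cut is then a $4$-edge cut of the plane cubic graph $G'$ (possibly with subdivision vertices of $\mathcal{R}$ on its edges) that does not separate the pairs $\{x_i,y_i\}$.

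First suppose one side, say $X$, contains no vertex of $\mathcal{R}$. Then $G[X]$ lies inside $G'$, away from $C$, and its edges are all positive. We now use criticality. By Theorem \ref{cut}, every edge of $G[X]$ lies in an equilibrated cut, and such a cut meets $C'$ in exactly two edges. Its curve $\gamma$ enters the region of $X$ through one cut edge and leaves through another. Rerouting $\gamma$ along the boundary of $X$ through $\partial(X)$ does not increase the number of positive edges. If $G[X]$ had two independent cycles, the part of $\gamma$ inside $X$ would use at least three positive edges, whereas the detour uses only two. This contradicts either minimality or Observation \ref{matching}, in the same spirit as the replacement arguments in Propositions \ref{bridgeface} and \ref{3+1}.

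Otherwise both sides contain vertices of $\mathcal{R}$, and each side contains whole negative edges, so each side has weight at least $2$. In this situation I would switch at $X$ to turn it into a $(0,4)$-cut. Then I would combine it with an equilibrated cut through an edge of $\partial(X)$ to produce, by taking symmetric differences or unions of sides, a cut whose positive part is smaller than its negative part, contradicting minimality. So this subcase cannot occur unless one side is small, which returns us to the previous arguments.

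\textbf{Main obstacle.} The main obstacle is the $(4,0)$ case. In particular, the subcase with $\mathcal{R}$ on both sides needs a careful uncrossing argument, and I expect that is where the real work lies. A second point is to check that the conclusion does not depend on the embedding, which the statement stresses. This follows because the case analysis uses only cut sizes and the minimum signature, not the specific embedding.
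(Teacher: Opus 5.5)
Your $(2,2)$ and $(3,1)$ cases are fine and match the paper. The $(4,0)$ case, however, which is the real content of the corollary, is not proved.

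\emph{Subcase $X\cap\mathcal{R}=\emptyset$.} Your key count fails. A $2$-edge-connected $G[X]$ with several independent cycles can still have a $2$-edge cut; take, e.g., two $4$-cycles joined by two edges. An equilibrated cut $\partial(Z)$ may then cross $G[X]$ in exactly those two edges and no edge of $\partial(X)$. Here the inner crossing uses two positive edges and the better detour along $\partial(X)$ also uses two. In this configuration both rerouted cuts $\partial(Z\cup X)$ and $\partial(Z\setminus X)$ are again equilibrated. So neither minimality of $\sigma$ nor Observation~\ref{matching} yields a contradiction.

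\emph{Subcase where both sides meet $\mathcal{R}$.} There is no argument here. Switching at $\partial(X)$ gives a signature with $k+4$ negative edges. That signature is not minimum, so neither Corollary~\ref{2+2}, Proposition~\ref{3+1}, nor any minimality argument applies to it. ``Symmetric differences or unions of sides'' does not name a cut that contradicts anything.

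The missing idea is to switch at an \emph{equilibrated} cut instead of at $\partial(X)$.
\begin{enumerate}
\item Take a (positive) edge $e_1\in\partial(X)$. By Theorem~\ref{cut} it lies in an equilibrated cut $\partial(Y)$.
\item Switching at $\partial(Y)$ gives a signature $\mu$ that still has $k$ negative edges. So $\mu$ is minimum, and $(G,\mu)\in\mathcal{S}^*(k)$ has its own canonical embedding by Theorem~\ref{basis}.
\item All four edges of $\partial(X)$ were $\sigma$-positive, so $d^-_\mu(X)=|\partial(X)\cap\partial(Y)|\ge 1$, since $e_1$ is in both.
\item Minimality of $\mu$ forces $d^-_\mu(X)\le d^+_\mu(X)=4-d^-_\mu(X)$.
\item Hence $\partial(X)$ is a $(3,1)$- or $(2,2)$-cut with respect to $\mu$. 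Corollary~\ref{2+2} or Proposition~\ref{3+1}, applied in the canonical embedding of $(G,\mu)$, then shows one side is $K_2$, $C_4$, or a single $C_4$-face.
\end{enumerate}
This handles both of your subcases at once, with no split according to $\mathcal{R}$. It is the paper's proof. The paper additionally uses $2$-edge-connectivity of $G[X]$ to check that $\partial(Y)$ does not cut through $G[X]$, and it tracks face counts so as to identify $X$ specifically as the small side.

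This also explains the remark that the conclusion does not depend on the embedding. The result is read off from a different canonical embedding than the one you started with, not from the case analysis being purely about cut sizes, as you suggest.
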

\begin{proof}
First assume that one of $G[X]$ and $G[X^c]$ has a bridge, say $G[X]$. Since $G$ is essentially $4$-edge-connected, it is not difficult to deduce that $G[X]\cong K_2$. So from now on both $G[X]$ and $G[X^c]$ are supposed to be $2$-edge-connected. We arbitrarily select a minimum signature $\sigma$ as well as a canonical embedding of $(G,\sigma)$, and let $G[X]$ be the part containing less number of faces. Since we have already solved the cases when $\partial_G(X)$ is a $(2,2)$ or $(3,1)$-cut in Corollary \ref{2+2} and Proposition \ref{3+1}, all the four edges of $\partial_G(X)$ are deemed to be positive under this embedding. Arbitrarily select one of them an label it with $e_1$. $e_1$ is contained in an equilibrated cut $\partial_G(Y)$, and we use the solid red curve $\gamma$ to show $\partial_G(Y)$ in Figure \ref{planecut}.

We now adopt a planar view of $(G,\sigma)$ based on $G^\prime$, in which every negative edge $x_iy_i$ in $E^-_\sigma(G)=\{x_1y_1,\ldots, x_ky_k\}$ is regarded to be broken by the cross cap into two halves on the plane, one linked to $x_i$ and one to $y_i$. We switch at $\partial_G(Y)$ and obtain another minimum signature $\mu$. On the plane, this operation is shown visually by the following procedure:
\begin{enumerate} 

\item Cut along $\gamma$ to separate $G$ into two pieces induced by $G[Y]$ and $G[Y^c]$. Every positive edge in $\partial_G(Y)$ is regarded to be broken into two halves by the cut.

\item Invert the piece $G[Y]$ and move it to another side of $G[Y^c]$.

\item Match every pair of halves belonging to $\partial_G(Y)\cap E_\sigma^-(G)$ into a complete edge.

\item Change the sign of every edge in $\partial_G(Y)$.

\end{enumerate}

\begin{figure}[H]
\centering
\includegraphics[width=12.7cm]{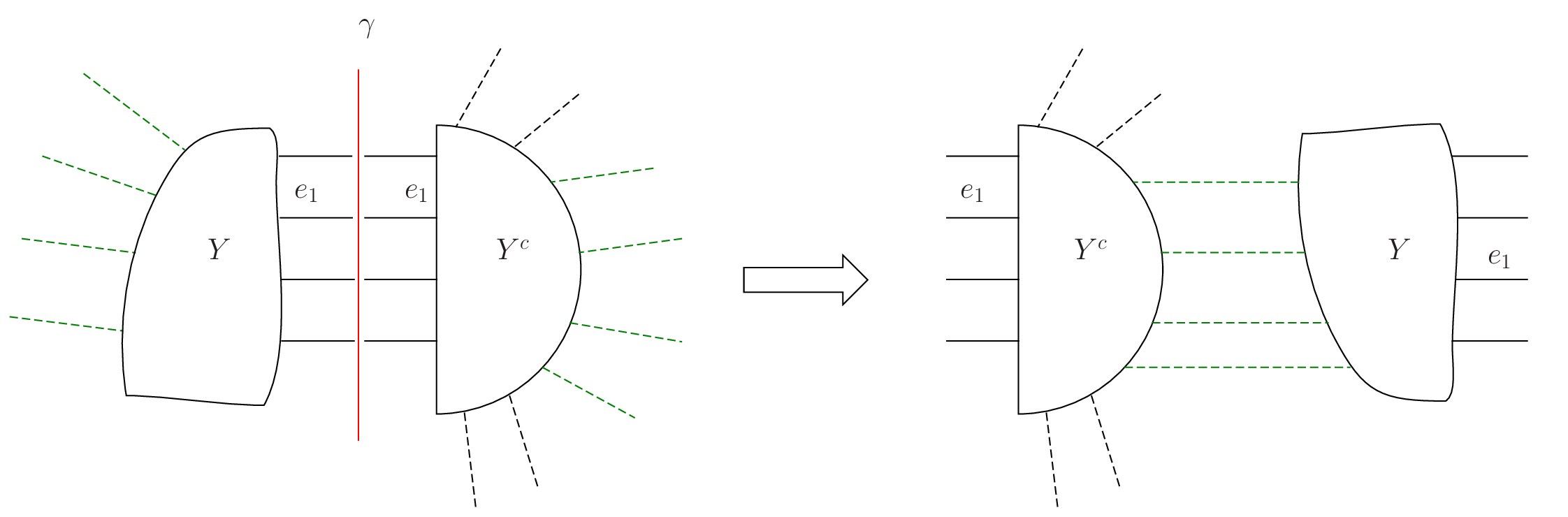}
\caption{}
\label{planecut}
\end{figure}

The process is displayed in Figure \ref{planecut}, where we use color green to highlight the negative edges belonging to $\partial_G(Y)$. We assert that the region corresponding to $G[X]$ is not separated in the planar view, i.e. $\partial_G(Y)\cap E(G[X])=\emptyset$. If not, assume $X$ is partitioned into at least two subsets $X=\bigcup\limits_{j=1}^{t} X_j$ ($t\geq 2$), then for each $j$, $|\partial_G(Y)\cap E(G[X_j])|\geq 2$ because $G[X]$ is $2$-edge-connected. On the other hand, there are at most three edges in $\partial_G(X)\cap E_\mu^+(G)$ after the switching ($e_1$ becomes negative), which implies that there is some component $X_j$ satisfying $|\partial_G(X_j)\cap E_\mu^+(G)|\leq 1$ and $|\partial_G(X_j)\cap E_\mu^-(G)|\geq 2$ at the same time, which is a contradiction.

Remember that the positive edges in any equilibrated cut form a matching, we finally obtain a new canonical embedding as a result after the switching, under which $\partial_G(X)$ becomes a $(2,2)$ or $(3,1)$-cut. From the perspective of the whole projective plane, the embedding of the underlying graph $G$ keeps unchanged, so the total number of faces is a constant, and thus $G[X]$ is still the part containing less number of faces. From the essence of Corollary \ref{2+2} and Proposition \ref{3+1}, $G[X]$ is exactly a single face bounded by a copy of $C_4$.
\end{proof}

\subsection{Finiteness of boundary faces}
In this part, we will exploit the properties of small cuts we have proposed in the last subsection to restrict the cardinality of boundary faces in canonical embeddings. For the convenience of analysis, we first standardize the canonical embedding of a signed graph in $\mathcal{S}^*(k)$. We say a canonical embedding is \textit{normal} if it contains no bridge face.
\begin{proposition}
For $k\geq 4$, every signed graph $(G,\sigma)\in S^*(k)$ admits a normal embedding up to switching equivalence.
\end{proposition}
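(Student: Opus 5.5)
We plan to remove bridge faces one at a time by switching, each time producing a new minimum signature with a new canonical embedding, and to show that the number of bridge faces strictly decreases. Let $F$ be a bridge face of a canonical embedding of $(G,\sigma)$. By Proposition \ref{bridgeface}, $G'\setminus(E(C_F)\cap E(C'))$ has exactly two components $A_1$ and $A_2$. Here $A_1$ is a single face $F'$ bounded by a $4$-cycle $x_1 w_1 w_2 x_2$ in $G$, where $w_1,w_2\in V(G')$ are the ends of $e_1,e_2$ on the $A_1$ side. Also $\omega(A_1)=2$ and $\omega(A_2)=2k-2$.

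Set $X=V(A_1)=\{x_1,x_2,w_1,w_2\}$. Then $\partial_G(X)$ consists of $e_1,e_2$ and the two negative edges at $x_1,x_2$, so it is a $(2,2)$-cut. Switching at $\partial_G(X)$ gives an equivalent signature $\sigma_1$ with the same number $k$ of negative edges, so $\sigma_1$ is again minimum. By Theorem \ref{basis} applied to $(G,\sigma_1)$, we obtain a canonical embedding. We will describe it explicitly in the planar view used in the proof of Corollary \ref{4cut}. We cut along the curve through $e_1,F,e_2$, invert the piece $A_1$, and move it across the cross cap. The two old negative edges at $x_1,x_2$ become positive and are drawn as ordinary plane edges attaching the $4$-cycle to the rest. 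The edges $e_1,e_2$ become negative and cross the cap.

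Next we track the faces. The face $F$ is split: its arcs in $A_2$ now bound faces whose intersection with the new boundary cycle is connected. The old face $F'$ becomes a boundary face of the new boundary cycle. The order of $\mathcal{R}$ along the new boundary is the old one with the block $x_1x_2$ replaced by the new ends on $e_1,e_2$. One must check that the new plane graph $G'_1$ is still a $2$-connected cubic plane graph with all its edges positive. This follows because the new embedding still comes from Theorem \ref{basis} for $(G,\sigma_1)\in\mathcal{S}^*(k)$.

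The key step, and the main obstacle, is to show that this switching creates no new bridge face and destroys $F$ as a bridge face, so that the number of bridge faces drops by at least one. The natural potential function is the number of bridge faces; a fallback is the number of faces meeting the boundary cycle $C'$. We check three things in turn. First, the faces formerly touching $C'$ only inside $A_2$ keep their intersection pattern with the boundary. Second, the face(s) obtained from $F$ meet the new boundary in a single arc, because the inverted $4$-face is now glued in along $e_1,e_2$. Third, any candidate new bridge face would have to contain both new boundary pieces near $e_1$ and $e_2$ together with an old boundary segment. If such a face arose, Lemma \ref{weight} would force a component of weight $2$ or $2k-2$, and Proposition \ref{bridgeface} would then require $C_F$ to have the wrong length (at least $6$ is used here), giving a contradiction. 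If this local analysis becomes messy, we will instead use the global count: the total number of faces of $G$ on the projective plane is invariant, as in Corollary \ref{4cut}, and each bridge face separates off a $C_4$ of weight $2$ (Proposition \ref{bridgeface}). This lets us show that the total number of boundary-face components strictly decreases. Iterating the switching then terminates in a normal embedding.
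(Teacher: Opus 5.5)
\textbf{There is a genuine gap: you switch at the wrong cut, and with your cut the key step fails.}

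You switch at $\partial_G(X)=\{e_1,e_2,x_1y_1,x_2y_2\}$ with $X=\{x_1,x_2,w_1,w_2\}$; here $y_1,y_2$ are the other ends of the negative edges at $x_1,x_2$. After this switch $e_1,e_2$ are negative. So the $4$-cycle is not ``glued in along $e_1,e_2$''. It hangs on $A_2$ by $x_1y_1,x_2y_2$ at $y_1,y_2$.

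Now follow the faces on the projective plane.
\begin{itemize}
\item $F$ is not split. It now carries the two negative edges $e_1,e_2$, so it becomes a cross-cap face.
\item The old cross-cap face between $x_1y_1$ and $x_2y_2$ is bounded by $x_1x_2$, $x_1y_1$, $x_2y_2$ and the segment of $C$ from $y_2$ to $y_1$. All its edges are now positive, so it becomes a face of the new plane graph. It meets the new boundary exactly in $y_1x_1$ and $x_2y_2$. These two pieces are separated by $x_1x_2$ (shared with $F'$) and by the $y_1y_2$-segment (shared with old boundary faces of $A_2$).
\end{itemize}
So this face is a new bridge face of weight $0$. Its small component is again the $4$-cycle $x_1w_1w_2x_2$, now with $w_1,w_2\in\mathcal{R}$. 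Note that $y_1,y_2$ also leave $\mathcal{R}$, which your description of the new order misses. Your switch merely reflects the bridge configuration through the cross cap.

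Worse, the small component of the new bridge face has vertex set $X$ again. Your step applied to it therefore switches at the same cut and returns $\sigma$. Hence no potential can strictly decrease under your step, whether the number of bridge faces, your fallback count, or anything else. If $F$ is the only bridge face, the process never terminates. Your length-$6$ argument excludes the new bridge face only when the $y_1y_2$-segment of $C$ has length at most $2$.

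\textbf{The fix is to switch at $\partial_G(\{x_1,x_2\})=\{x_1w_1,x_2w_2,x_1y_1,x_2y_2\}$,} which is also a $(2,2)$-cut. Then:
\begin{itemize}
\item $w_1,w_2$ become ends of negative edges. So $e_1$, $w_1w_2$, $e_2$ merge into one boundary edge of weight $2$, and $F$ meets the new boundary in a single piece.
\item $F'$ moves into the cross cap.
\item The $K_2$ on $x_1x_2$ reattaches at $y_1,y_2$ as a boundary edge of weight $2$. The new face next to it is an ordinary boundary face, not a bridge face.
\item Faces along the old $y_1y_2$-segment only lose boundary.
\end{itemize}
The only way a bridge face can appear is $y_1y_2\in E(G)$, because then the boundary face through $y_1y_2$ splits into two pieces. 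In that case $F'$ and the $4$-face $x_1x_2y_2y_1$ give the set $\{w_1,w_2,x_1,x_2,y_1,y_2\}$, which has a $4$-edge cut and more than one cycle on each side. This contradicts Corollary~\ref{4cut}. So the number of bridge faces strictly drops, and this is the paper's argument.
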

\begin{proof}
For $(G,\sigma)\in S^*(k)$, we arbitrarily fix a canonical embedding of it. Let $F$ be a bridge face if it truly exists, and denote the $C_4$-face adjacent to $F$ by $F^\prime$ according to Proposition \ref{bridgeface}. Let the two vertices of $\mathcal{R}$ on $C_{F'}$ be $x_1$ and $x_2$ without loss of generality. We switch at the equilibrated cut $\partial_G(\{x_1,x_2\})$, and as explained in the proof of Corollary \ref{4cut}, it is equivalent to clipping off the $K_2$ formed by $\{x_1,x_2\}$ on the plane and then move it to the other side inversely.

We assert that in the resulting canonical embedding, the number of bridge faces has been strictly decreased by the switching. If not, then the only possible case is that the antipodals of $x_1$ and $x_2$: $y_1$ and $y_2$ are on the boundary cycle of another boundary face, and furthermore, $\{x_1,x_2,y_1,y_2\}$ induces a $C_4$ enclosing a face embedded in the cross cap, denoted by $F^{\prime\prime}$. Consequently, the two adjacent $C_4$-faces $F^\prime$ and $F^{\prime\prime}$ generate a cut consisting of four edges in $G$, which is a contradiction to Corollary \ref{4cut}.

So starting from the initial canonical embedding, by sequentially switching at the $(2,2)$-cuts formed by $K_2$, we are finally able to eliminate all bridge faces and obtain a normal embedding.
\end{proof}

Recall that a \textit{composition} of an integer $N$ is a way of writing $N$ as the sum of an ordered sequence of positive integers. \textit{Cyclic compositions} of an integer $N$ is defined as equivalent classes on the set of all compositions of $N$ such that, two compositions belong to the same class if and only if one can be obtained from the other by a cyclic shift \cite{cc}. By Lemma \ref{weight}, the weight distribution of boundary faces in every normal embedding of $(G,\sigma)\in \mathcal{S}^*(k)$ determines a cyclic composition of $2k$ consisting of $1$ and $2$, possibly interlaced by segments of consecutive $0$s. We demonstrate the finiteness of boundary faces for $k=4$ and $5$ by limiting the length of each $0$-segment.

\begin{proposition}\label{finite}
Arbitrarily fix a normal embedding of $(G,\sigma)\in \mathcal{S}^*(k)$.
\begin{enumerate}
\item When $k=4$, there do not exist two consecutive edges on $C'$ of weight $0$;

\item When $k=5$, the length of consecutive $0$s will not exceed $3$.
\end{enumerate}
\end{proposition}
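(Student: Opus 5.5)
Fix a normal embedding and suppose there is a maximal run $f_1,\dots,f_m$ of consecutive weight-$0$ edges of $C'$, with $m\ge 2$ when $k=4$ and $m\ge 4$ when $k=5$. Since the embedding is normal, each $f_j$ lies on its own boundary face $F_j$, none of them a bridge face, and $\omega(F_j)=\omega(f_j)=0$. The plan is to look at equilibrated cuts through these positive edges. By Theorem \ref{cut}, each $f_j$ lies in an equilibrated cut $B_j$, represented by a curve $\gamma_j$ that meets $C'$ in exactly two edges $f_j$ and $g_j$. The two arcs of $C'$ cut off by $\gamma_j$ carry weights $a_j$ and $2k-a_j$ with $a_j\le k$, and the cut is an $(r,a_j)$-cut with $r=a_j$ positive edges. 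The aim is to show that such cuts cannot all exist.

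\textbf{Step 1: small cuts through $f_j$ are impossible.} A $(2,2)$-cut through $f_j$ would, by Corollary \ref{2+2}, have its two positive edges on a single boundary face whose vertex set carries the two $\mathcal{R}$-vertices. The side containing $f_j$ would then be a $K_2$ (so $f_j$ is weighted) or a $C_4$-face containing $f_j$ with weight $2$. Both contradict $\omega(f_j)=0$ together with the fact that the weight of a boundary face sits on its $C'$-edges. Similarly, a cut with three positive edges through $f_j$ is, by Proposition \ref{3+1}, a $K_2$/$C_4$ side or a single $C_5$-face side containing both weighted ends. A short check using Lemma \ref{weight}(1),(2) should exclude $f_j$ being on such a side unless $f_j$ is adjacent to the weighted edges of that face. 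For $k=4$ this leaves only $(4,4)$-cuts, which split the weight $4/4$. For $k=5$ it leaves $(4,4)$- and $(5,5)$-cuts, plus possibly $(3,3)$-cuts for the endmost zeros of the run. I expect the $(3,3)$ case also to be excluded for interior zeros.

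\textbf{Step 2: the $(4,4)$-cuts are tiny.} For a $(4,4)$-cut $\partial(X)$, Corollary \ref{4cut} says one side contains at most one cycle. Combined with essential 4-edge-connectivity and cubicity, that side is a $K_2$ or a $C_4$. A side that is $K_2$ or $C_4$ while absorbing four negative edges would need all four $\mathcal{R}$-vertices in it, which is impossible in a path/face on $C$ near a weight-$0$ edge. More precisely, the side containing $f_j$ with weight $\ge 4$ is too small. This kills $(4,4)$-cuts through $f_j$ entirely. For $k=4$, this already gives the contradiction once $m\ge 2$.

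Here is why two consecutive zeros matter. With a single zero, $f_j$ could lie on a cut whose other side is the small one, with the weight concentrated there. With two consecutive zeros $f_1,f_2$ sharing a vertex $w$, any such cut through $f_1$ can be rerouted through the face at $w$ to $f_2$ (replacement arguments as in Proposition \ref{bridgeface}). This produces a cut with the same negatives and not more positives, whose positive edges fail to be a matching at $w$, contradicting Observation \ref{matching}.

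\textbf{Step 3: the $k=5$ case and the main obstacle.} For $k=5$ the surviving cuts through a zero edge are $(5,5)$-cuts, possibly also $(3,3)$-cuts. I would use the replacement trick: for zeros $f_j,f_{j+1},f_{j+2}$ inside a run of length $\ge 4$, compare cuts through the middle ones. Rerouting $\gamma_j$ along the chain of adjacent boundary faces $F_{j-1},F_j,F_{j+1}$ decreases the number of positive edges while keeping the negative edges fixed, since the zero edges carry no $\mathcal{R}$-vertices. This yields either a cut with more negatives than positives, or a non-matching equilibrated cut. The main obstacle is controlling the $(5,5)$-cuts, whose two sides are not forced to be small by Corollary \ref{4cut}. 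I would first try to show that a $(5,5)$-cut through an interior zero $f_j$ can be shifted to one through $f_{j\pm1}$ sharing a face, and then apply the matching contradiction. Run length $4$ is exactly what guarantees an interior zero with two zero neighbours on each side needed for this shift.
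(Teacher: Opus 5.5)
Your proposal has a genuine gap. Steps 1--2 apply results outside their scope, and the overall strategy of excluding equilibrated cuts through the zero edges $f_j$ cannot work.

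\textbf{Misapplied results.} Corollary~\ref{4cut} concerns cuts with exactly four edges in total. A $(4,4)$-cut has eight edges, so the corollary says nothing about its sides. Likewise, Proposition~\ref{3+1} treats only $(3,1)$- and $(3,2)$-cuts, not the $(3,3)$-cuts you need.

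\textbf{Step 1 is false as stated.} $f_j$ lies in the cut, not inside the $C_4$-face. A zero edge adjacent on $C'$ to a weight-$2$ edge bounding a $C_4$-face is exactly one of the two positive edges of that $(2,2)$-cut. This is the configuration of $A_1$ and the weight-$0$ edges $e_1,e_2$ in Proposition~\ref{bridgeface}.

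\textbf{The strategy proves too much.} The core of Steps 1--2 never uses that the zeros are consecutive, so it would equally forbid isolated zero edges when $k=4$, which the statement allows. Moreover, Theorem~\ref{cut} guarantees that every $f_j$ does lie in an equilibrated cut. The rerouting you add to bring in consecutiveness goes the wrong way. Replacing $f_1$ by $f_2$ together with the third edge $e$ at their common vertex $w$ adds one positive edge and no negative one (since $w\notin\mathcal{R}$). The new cut is therefore not equilibrated, and Observation~\ref{matching} does not apply to it. Step 3 for $k=5$ is only a plan, with the main obstacle left open.

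\textbf{The missing idea} is to look at an equilibrated cut through $e$ itself, the edge shared by the two weight-$0$ boundary faces $F_1,F_2$. Such a cut must cross both $F_1$ and $F_2$. In a normal embedding their only $C'$-edges are $f_1,f_2$, both incident with $w$, so by Observation~\ref{matching} the curve cannot exit $F_1$ or $F_2$ directly through them. It must leave each face through an internal edge and still reach $C'$ on both sides. Hence it has at least five positive edges, which is impossible for $k=4$.

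For $k=5$ the cut is then exactly $[e_1'F_1'h_1F_1eF_2h_2F_2'e_2']$, with boundary faces $F_i'$ adjacent to $F_i$. Splitting it at $e$ gives two cuts $\partial(X_1),\partial(X_2)$, each with three positive edges. Since $w,f_1,f_2$ carry no vertices of $\mathcal{R}$, we get $d^-(X_1)+d^-(X_2)=5$, and each term is at most $3$. So one of them is a $(3,2)$-cut. Proposition~\ref{3+1} then forces the $C'$-edge adjacent to $f_1$ (or to $f_2$) on the side away from $w$ to have positive weight. Applying this to the middle pair of four consecutive zeros gives the contradiction.
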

\begin{proof}
Fixed a normal embedding of $(G,\sigma)$. Assume $e_1$ and $e_2$ are two consecutive edges on $C'$ with $\omega(e_1)=\omega(e_2)=0$, and let $F_1$, $F_2$ be the two boundary faces they belong to respectively. We use $e$ to denote the edge $E(C_{F_1})\cap E(C_{F_2})$. Every equilibrated cut containing $e$ has at least four positive edges, and if the number of positive edges is exactly four, then one of $e_1$ and $e_2$ must be included in the cut, contrary to Observation \ref{matching}. So edges of weight $0$ will not be consecutive on $C'$ when $(G,\sigma)\in \mathcal{S}^*(4)$.

When $(G,\sigma)\in \mathcal{S}^*(5)$, every equilibrated cut containing $e$ has exactly five positive edges. To achieve this, there exist two more distinct boundary faces $F_1^\prime$ and $F_2^\prime$,
with $F_i^\prime$ adjacent to $F_i$ ($i=1,2$), see Figure \ref{5cut}, where we use the red curve $\gamma$ to show the cut. We use notation $B$ to denote it.

\begin{figure}[H]
\centering
\includegraphics[width=7cm]{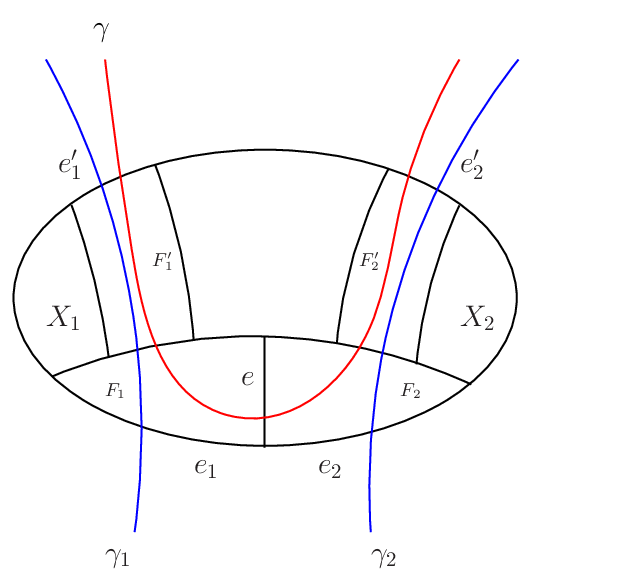}
\caption{}
\label{5cut}
\end{figure}

Let $e_1^\prime=C_{F_1^\prime}\cap C'$ and $e_2^\prime=C_{F_2^\prime}\cap C'$. $B=[e_1^\prime F_1^\prime \ldots F_2^\prime e_2^\prime]$ can be split into two sequences $B_1=[e_1^\prime F_1^\prime (E(C_{F_1^\prime})\cap E(C_{F_1}))F_1 e_1]$ and $B_2=[e_2^\prime F_1^\prime (E(C_{F_2^\prime})\cap E(C_{F_2}))F_2 e_2]$, each of which represents a cut containing exactly three positive edges. In Figure \ref{5cut}, they are shown by another two blue curves $\gamma_1$ and $\gamma_2$. The two cuts can be written in the form $\partial_G(X_1)$ and $\partial_G(X_2)$ respectively, where $X_i\subseteq V(G)$ is the peripheral part separated by $\gamma_i$ in Figure \ref{5cut} not containing ends of $e$ ($i=1,2$). By assumption, $d^-(X_1)+d^-(X_2)=5$, and without loss of generality, we let $\partial_G(X_1)$ be the cut with less negative edges. It can be deduced from Proposition \ref{3+1} that either $G'[X_1]$ is a single boundary face of positive weight lying between $F_1$ and $F_1^\prime$, or $F_1$ and $F_1^\prime$ are two consecutive boundary faces on $C'$. Anyway, when $k=5$, one of $e_1$ and $e_2$ must meet a positively weighted edge on $C'$, which implies the length of consecutive $0$s will not exceed $3$.
\end{proof}

\subsection{Transmission of finiteness}

For $(G,\sigma)\in \mathcal{S}^*(k)$, let $\mathcal{F}$ be the face set of $(G,\sigma)$ under a normal embedding. When $k=4$ or $5$, $\mathcal{F}$ admits a natural partition $\mathcal{F}=\mathcal{F}_0\cup \mathcal{F}_1 \cup \mathcal{F}_2$, where $\mathcal{F}_0$, $\mathcal{F}_1$ and $\mathcal{F}_2$ are the sets of faces in the cross cap, boundary faces and internal faces respectively. $\mathcal{F}_0$ has cardinality $k$, and we have already shown that $|\mathcal{F}_1|$ is bounded in the above section. In this part, we are going to build a map from $\mathcal{F}_2$ to $\mathcal{F}_1 \times \mathcal{F}_1$ to transmit the message of finiteness from $\mathcal{F}_1$ to $\mathcal{F}_2$.

\begin{lemma}\label{inner3}
Fix a canonical embedding of $(G,\sigma)\in \mathcal{S}^*(k)$ $(k\geq 4)$. Not considering the outer face corresponding to the cross cap, for any two fixed faces of $G^\prime$, there are at most three faces of $G'$ adjacent to both of them. Especially, for two boundary faces whose edges on $C'$ are consecutive, they have at most one common adjacent face of $G'$.
\end{lemma}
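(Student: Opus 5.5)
The plan is to turn ``many common neighbours'' into a small edge cut and then contradict Corollary \ref{4cut} or essential 4-edge-connectivity. Fix two faces $F_a,F_b$ of $G'$, and suppose $H_1,\dots,H_m$ are distinct faces of $G'$, other than the outer face, each adjacent to both $F_a$ and $F_b$. In the plane, choose a point in $F_a$, a point in $F_b$, and for each $i$ a simple curve from $F_a$ through $H_i$ to $F_b$. Each curve crosses exactly two edges: the one shared by $F_a$ and $H_i$, and the one shared by $H_i$ and $F_b$.

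Any two such curves together form a closed curve $\gamma_{ij}$. It meets $G$ only in four positive edges, and none of these is an edge of $C'$ unless one of the faces involved is a boundary face; in that case the edges still belong to $G'$. By the Jordan curve theorem, $\gamma_{ij}$ separates the vertices of $G'$, so it defines an edge cut $\partial_G(X_{ij})$ with at most four edges. The side not containing the outer face (the disc side) contains no vertex of $\mathcal{R}$ unless the curve passes over boundary edges. In every case the inner side is a genuine subgraph of $G$.

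Now take $m\ge 4$ and order $H_1,\dots,H_m$ cyclically around $F_a$; by planarity they are also cyclically ordered around $F_b$. The curves through $H_1,\dots,H_m$ divide the plane into $m$ sectors. Since the outer face is a single face, at most one sector contains it. The remaining $m-1\ge 3$ sectors are cut off by 4-edge cuts. Among these, take a sector strictly between $H_i$ and $H_{i+1}$ and another one beside it. The union of two consecutive bounded sectors, together with the face $H_{i+1}$ between them, is bounded by a 4-edge cut. Its inside contains at least two cycles: $C_{H_{i+1}}$, and a cycle from each sector, or the boundaries of the faces met there. This is the step I expect to need the most care.

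To finish, I will use Corollary \ref{4cut}: one side of every 4-edge cut induces a subgraph with at most one cycle, i.e.\ a $K_2$ or a $C_4$ face. The outer side of the cut above contains $F_a$, $F_b$ and the remaining sectors. If it is also cyclically rich, we have a contradiction. The delicate point is ensuring that both sides contain at least two cycles. If $F_a$ and $F_b$ are adjacent, or a sector is degenerate (two $H_i$ sharing an edge), I will instead note that the two cut edges are adjacent at a vertex of degree 3. This gives a 3-edge cut, contradicting essential 4-edge-connectivity from Theorem \ref{basis}, or contradicting the $C_4$ structure.

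For the special statement, let $F_1,F_2$ be boundary faces with consecutive edges on $C'$, meeting at a vertex $w$ of $C'$; their common edge $e$ is incident to $w$. Suppose two common neighbours $H,H'$ exist. The curve from the outer face through $F_1$, then $H$, then $F_2$, and back to the outer face crosses at most three positive edges of $C'\cup G'$. Together with the analogous curve through $H'$, this encloses a region containing $e$'s other endpoint. The resulting cut uses the two $C'$-edges and the edges $F_1\cap H$, $H\cap F_2$, or their primed versions. This gives a 4-edge cut, or a 3-edge cut, whose both sides contain a cycle: $C_H$ on one side, and on the other side a cycle through $w$ or the cross cap structure. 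This contradicts Corollary \ref{4cut}, or essential 4-edge-connectivity unless the inner side is a single $C_4$ face, which is impossible because it contains $H$, $H'$ and $e$.
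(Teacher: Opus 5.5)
\textbf{Verdict: the strategy is right, but both halves have a genuine gap.} Turning common neighbours into a small cut and invoking Corollary \ref{4cut} is what the paper does. However, the cut you choose for the main bound does not give a contradiction, and the argument for the special statement does not hold together.

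\textbf{Main bound.} The region made of two consecutive bounded sectors together with $H_{i+1}$ need not contain two cycles. Suppose $H_i\cap H_{i+1}$ and $H_{i+1}\cap H_{i+2}$ are single edges running from $C_{F_a}$ to $C_{F_b}$, a ladder as in a prism. Then both sectors contain only these edges, and the inside of the curve through $H_i$ and $H_{i+2}$ is exactly $C_{H_{i+1}}$, a $4$-cycle. That is precisely what Corollary \ref{4cut} permits. Your fallback does not rescue this case. The endpoint of $H_i\cap H_{i+1}$ on $C_{F_a}$ meets only one cut edge, namely $F_a\cap H_i$, and likewise for the endpoint on $C_{F_b}$, so no $3$-edge cut appears. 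With $m=4$ and all bounded sectors degenerate, each of your candidate regions is a single $C_4$-face.

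The missing idea is to take the largest region. Let $H_1,H_m$ be the two common neighbours flanking the sector that contains the cross cap, and use the curve $F_a\to H_1\to F_b\to H_m\to F_a$.
\begin{itemize}
\item No crossed edge lies on any $C_{H_j}$ with $2\le j\le m-1$. So the inner side contains $H_2,\dots,H_{m-1}$ as whole faces, giving at least two distinct cycles once $m\ge 4$.
\item The point you left open, that the other side also has two cycles, is easy. Every crossed edge is shared by two faces of $G'$ other than the outer face, so none lies on $C'$. Hence all of $C$ and all $k$ negative edges lie on the outer side. The paper reaches the same conclusion via two further boundary faces $F_3,F_4$ outside the region.
\end{itemize}
Corollary \ref{4cut} then gives $m-2\le 1$.

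\textbf{Special statement.} Several claims here conflict with your own construction.
\begin{itemize}
\item Your curves pass through $H$ and $H'$, so the enclosed region cannot contain $H$, $H'$ or $C_H$.
\item The disc side of the $4$-cut through $F_1,H,F_2,H'$ does not contain the other endpoint $x$ of $e=xy$. The edge $xy$ is not crossed, and $y\in C'$ lies on the outer side.
\end{itemize}
The paper instead uses one common neighbour $H$ and the closed curve $F_1\to H\to F_2$, returning to $F_1$ across $xy$. This curve meets only three edges. The side containing $x$ is the single vertex $x$ only if $H$ is the third face at $x$. Otherwise it is an essential $3$-edge cut, contradicting Theorem \ref{basis}. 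So every common neighbour other than the outer face contains $x$, and $x$ lies on only one face besides $F_1$ and $F_2$.

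Your own single curve through $H$, closed just below $y$, already gives a $4$-cut with two cut edges at $y$. Moving $y$ across the cut yields exactly this $3$-cut, so the curve through $H'$ is not needed.
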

\begin{proof}
\begin{figure}[H]
\centering
\includegraphics[width=7cm]{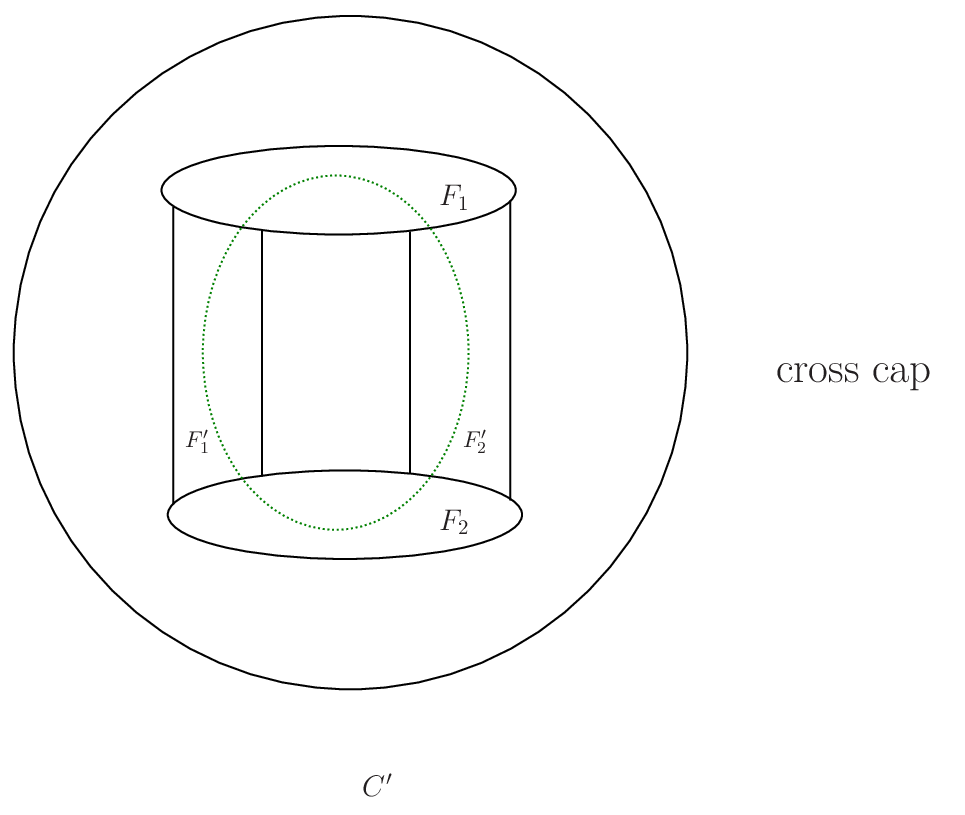}
\caption{}
\label{common}
\end{figure}
We use $F_1$ and $F_2$ to represent the two faces of $G'$ in question who have at least two common adjacent faces. The common neighboring faces of $F_1$ and $F_2$ are arranged in cyclic order. Among them there is one corresponding to the cross cap of projective plane, and we take the two faces of $G^\prime$ adjacent to it, say $F_1^\prime$ and $F_2^\prime$. The region enclosed by the four faces $F_1$, $F_2$, $F_1^\prime$ and $F_2^\prime$ generates a cut of at most four edges, see Figure \ref{common}, where the cut is marked with a dotted green cycle. From the rules of weight distribution of boundary faces in Lemma \ref{weight}, it can be deduced that when $k\geq 4$, the number of boundary faces is at least $6$, which implies that there exist another two boundary faces $F_3,F_4 \notin \{F_1, F_2, F_1^\prime, F_2^\prime\}$ lying outside the region. So by Corollary \ref{4cut}, the region is a single face bounded by a copy of $C_4$ when $F_1^\prime$ and $F_2^\prime$ are not adjacent, or a copy of $K_2$ otherwise. Anyway, the total number of faces adjacent to both $F_1$ and $F_2$ will not exceed 3.

In particular, when $F_1$ and $F_2$ are two adjacent boundary faces whose edges on $C'$ are consecutive, let $xy$ be the common edge of $E(C_{F_1})\cap E(C_{F_2})$ with $y\in V(C')$. Not considering the outer face corresponding to the cross cap, a common adjacent face of $F_1$ and $F_2$ must contain vertex $x$, for otherwise the three faces will generate an essential cut of size $3$ in $G$, contrary to the fact that $G$ is essentially 4-edge-connected. It is deduced that the common adjacent face of $F_1$ and $F_2$ is unique.
\end{proof}

\begin{lemma}\label{inner2}
Every internal face is adjacent to at least two boundary faces.
\end{lemma}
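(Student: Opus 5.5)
We take an internal face $F$ of $G'$, pick a positive edge $e$ on $C_F$, and look at an equilibrated cut through $e$. By Theorem \ref{cut}, every positive edge $e$ lies in some equilibrated cut $\partial_G(X)$, and we assume this cut is minimized. By Observation \ref{obs-1}, such a cut is represented by a curve $\gamma=[e_1F_1e_2\ldots F_{r-1}e_r]$. This curve starts and ends on edges of $C'$, so it enters $F$ through $e$ and must also leave $F$ through another edge of $C_F$.

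\textbf{Step 1: one boundary face along each branch of $\gamma$.} Trace $\gamma$ from $F$ toward each of its two ends $e_1$ and $e_r$. On each of these two subsequences the last face visited before reaching $C'$ is a boundary face. Call the face reached toward $e_1$ $F_1$ and the face reached toward $e_r$ $F_{r-1}$. What remains is to show that these can be chosen so that $F$ is adjacent to two distinct boundary faces.

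\textbf{Step 2: shortcutting (the main obstacle).} Suppose $F$ is adjacent to at most one boundary face. Then, on each side of $F$, the curve $\gamma$ must pass through at least one further internal face before it reaches a boundary face. We use the replacement argument from Proposition \ref{bridgeface} and Proposition \ref{3+1}. Let $F_j$ be the first boundary face on the branch of $\gamma$ from $F$ toward $e_r$. Replace the subsequence $\gamma[F_j,e]$, and everything beyond it, with a direct exit from $F_j$ to its edge on $C'$. The resulting cut $B'$ avoids $e$, which is not itself a contradiction.

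A better version is the following. Choose $e$ to be an edge of $C_F$ that is shared with a non-boundary face, if such an edge exists. Using that $G$ is essentially $4$-edge-connected (Theorem \ref{basis}) and Corollary \ref{4cut}, show that the region of $G'$ cut off by $\gamma$ near $F$ would yield a small edge cut whose smaller side contains at least two cycles. This cut is formed by $F$ together with its unique boundary neighbour, or by $F$ alone if it has no boundary neighbour. Its size can be bounded by counting the positive edges of $\gamma$ that are forced through $F$, and the cut contradicts Corollary \ref{4cut}.

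\textbf{Step 3: the case of no boundary neighbour.} The case where $F$ has no boundary neighbour is the cleanest. Take the union $U$ of $F$ and all internal faces reachable from $F$ without touching boundary faces. Then $\partial(U)$ consists only of positive edges. Any equilibrated cut through an edge of $\partial(U)$ must cross from $U$ into a boundary face. By Observation \ref{matching}, the positive edges of that cut form a matching. A minimality replacement then gives a cut with strictly fewer positive edges and the same negative edges, contradicting minimality of $\sigma$.

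\textbf{Step 4: the case of exactly one boundary neighbour $F_1$.} Every equilibrated cut through an edge of $C_F$ not on $C_{F_1}$ must exit on both sides through boundary faces. Since $F$ has only one boundary neighbour, one side must detour through further internal faces. Rerouting that side through $F_1$ shortens the cut while keeping its negative edges, again a contradiction. I expect the careful bookkeeping of the negative edges lost or gained during such reroutes to be the delicate point. It would be handled exactly as in Claim \ref{clm-0}: the rerouted curve ends on the same arc of $C'$, or on an arc whose weight difference is controlled.
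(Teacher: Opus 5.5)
Your proposal has a genuine gap: it never uses the quantitative fact that drives this lemma, and it never uses $k\in\{4,5\}$. For larger $k$, an equilibrated cut can run through long chains of internal faces, so an argument that ignores $k$ should make you suspicious. The missing idea is a simple count. $(G,\sigma)$ has only $k$ negative edges, so every equilibrated cut has at most $k$ positive edges. A curve $\gamma$ through the common edge of two adjacent internal faces $I_1,I_2$ must cross at least five positive edges: the first edge on $C'$, an edge into $I_1$, the common edge, an edge out of $I_2$, and the last edge on $C'$. A curve through an edge of an internal face whose neighbours are all internal must cross at least six.
\begin{itemize}
\item For $k=4$, this forbids adjacent internal faces altogether.
\item For $k=5$, it first forbids internal faces with no boundary neighbour. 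Then take a boundary neighbour $F$ of $I_1$, and let $I_2$ be the third face at an end of $C_{I_1}\cap C_F$. If $I_2$ is internal, every equilibrated cut through $e=C_{I_1}\cap C_{I_2}$ has exactly five positive edges, so it exits $I_1$ directly into a boundary face. That face is not $F$: otherwise the cut would contain $e$ together with $C_{I_1}\cap C_F$ or $C_{I_2}\cap C_F$, all incident to one vertex, contradicting Observation~\ref{matching}.
\end{itemize}

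Your Steps 2--4 instead rest on shortcut and reroute replacements that are not justified.
\begin{itemize}
\item Rerouting an end of $\gamma$ through $F_1$ changes the arc of $C'$ on which $\gamma$ ends. That changes which vertices of $\mathcal{R}$ lie on each side, so it can change the number of negative edges. This bookkeeping, which you leave open, is the whole content of any contradiction.
\item In Step 4 the other branch of $\gamma$ may already pass through $F_1$, so the rerouted sequence need not represent a cut at all.
\item The ``minimality replacement'' in Step 3 is never specified.
\item The claim in Step 2 that both branches of $\gamma$ must pass through further internal faces is false when $F$ has one boundary neighbour.
\end{itemize}
As written, no step reaches a contradiction. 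The counting bound above, plus the matching observation for $k=5$, is what is needed.
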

\begin{proof}
The number of positive edges in an equilibrated cut containing the common edge of two internal faces is at least five, so the claim is obvious when $k=4$. We only consider the case $k=5$. First notice that every internal face is adjacent to at least one boundary face. On the contrary, suppose there is an internal face $I$ whose neighboring faces are all internal faces too, then $\forall e'\in E(C_I)$, the equilibrated cut containing $e'$ must include at least six positive edges, which is impossible for a signed graph in $\mathcal{S}^*(5)$.

Arbitrarily take an internal face $I_1$, and let $F$ be a boundary face adjacent to it. Since $G$ is cubic, $I_1$ and $F$ share at least two common neighboring faces, from which we randomly select one and label it with $I_2$. Let $e=E(C_{I_1})\cap E(C_{I_2})$. If $I_2$ is also an internal face, then every equilibrated cut containing $e$ has exactly five positive edges, and moreover, $F$ will not appear in the sequence of the cut, for otherwise one of $E(C_{I_1})\cap E(C_F)$ and $E(C_{I_2})\cap E(C_F)$ will be in the sequence too, which is a contradiction to Observation \ref{matching}. So we conclude that there exist two distinct boundary faces $F_1$ and $F_2$ different from $F$, such that $F_i$ is adjacent to $I_i$ ($i=1,2$). Thus the lemma is proved.
\end{proof}

Based on Lemma \ref{inner3} and Lemma \ref{inner2}, one can naturally construct a map $\eta: \mathcal{F}_2 \mapsto \mathcal{F}_1 \times \mathcal{F}_1$ with the property that, for any fixed pair $(F_1,F_2)\in \mathcal{F}_1 \times \mathcal{F}_1$, $|\eta^{-1}((F_1,F_2))|\leq 3$. We are now able to transmit the message of finiteness via map $\eta$. For a signed graph $(G,\sigma)\in \mathcal{S}^*(4)$ or $\mathcal{S}^*(5)$, $G$ admits a normal embedding $\Sigma$ with at most $M$ boundary faces, where $M$ is an undetermined constant. From the embedding we construct an auxiliary multigraph $Q$ in the following way:

Let $V(Q)=\mathcal{F}_1$, and arrange all vertices on a cycle $C_M$ according to the order of boundary faces in $\Sigma$. Embed $C_M$ on the plane and let it enclose a region $\Omega$. For every internal face $I$ in $G^\prime$, a pair $\eta(I)\in \mathcal{F}_1 \times \mathcal{F}_1$ is output, then correspondingly we add the edge representing $\eta(I)$ within $\Omega$.

Notice that $Q$ may not be unique due to the selection of $\eta(I)$ at every step. However, it is practicable to show that all possible $Q$ admit a uniform upper bound on the cardinality of edges. By Lemma \ref{inner3}, the multiplicity of every multiedge of $Q$ is at most $3$, and especially, all edges on $C_M$ are simple. Since $Q$ is a planar graph, we apply Euler's Formula to $Q$ . As a result, it is shown that the cardinality of $|E(Q)|$ is no more than $4M-9$.

In the end, it remains to determine a certain value for the upper bound $M$. Remember that every normal embedding of $(G,\sigma)\in \mathcal{S}^*(k)$ corresponds to a cyclic composition of $2k$ consisting of $1$, $2$ interlaced by some $0$s. In \cite{cc}, the generating function of cyclic composition number was derived by Hadjicostas.

\begin{lemma}
Let $A$ be a subset of $\mathbb{Z}^+$, and $c_A(n)$ be the number of cyclic compositions of $n$ such that every integer in the sum is contained in $A$. The generating function of $c_A(n)$ is
\begin{displaymath}
\sum\limits_{n\geq 1} c_A(n) x^n=\sum\limits_{n\geq 1} \frac{\phi(n)}{n} \log\frac{1}{1-\sum\limits_{s\in A}x^{sn}},
\end{displaymath}
where $\phi(n)$ is Euler's totient function at $n$.
\end{lemma}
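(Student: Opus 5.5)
We prove the lemma by the standard decomposition of cyclic compositions into primitive (aperiodic) classes, combined with Möbius inversion.

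Put $f(x)=\sum_{s\in A}x^s$. The number of linear compositions of $n$ into exactly $m$ parts from $A$ is $[x^n]f(x)^m$. The cyclic group $\mathbb{Z}_m$ acts on these sequences by rotation, and cyclic compositions with $m$ parts are exactly the orbits.

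\textbf{Step 1 (Burnside).} By Burnside's lemma, the number of orbits is $\frac1m\sum_{r=0}^{m-1}\mathrm{Fix}(r)$. A sequence fixed by rotation by $r$ is periodic with period $d=\gcd(r,m)$. It is therefore a block of $d$ parts repeated $m/d$ times. Such a sequence sums to $n$ if and only if the block sums to $n d/m$, so the fixed sequences are counted by $[x^{n}]f(x^{m/d})^{d}$.

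Each $d\mid m$ arises for exactly $\phi(m/d)$ values of $r$. Writing $j=m/d$, we obtain
\begin{displaymath}
c_A(n)=\sum_{m\ge1}\frac1m\sum_{j\mid m}\phi(j)\,[x^n]f(x^j)^{m/j}.
\end{displaymath}

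\textbf{Step 2 (summing over $n$).} Multiply by $x^n$, sum over $n$, and substitute $m=jd$. This gives
\begin{displaymath}
\sum_{n\ge1}c_A(n)x^n=\sum_{j\ge1}\phi(j)\sum_{d\ge1}\frac{1}{jd}f(x^j)^d=\sum_{j\ge1}\frac{\phi(j)}{j}\log\frac{1}{1-f(x^j)},
\end{displaymath}
using $\sum_{d\ge1}y^d/d=\log\frac1{1-y}$. Since $f(x^j)=\sum_{s\in A}x^{sj}$, this is exactly the claimed formula after renaming $j$ as $n$.

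\textbf{Step 3 (justification).} All series are formal power series in $x$, and $f$ has no constant term because $A\subseteq\mathbb{Z}^+$. Hence each coefficient $x^n$ receives only finitely many contributions (those with $m\le n$), so the interchange of summations is legitimate.

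The only step that needs care is the fixed-point count in Step 1: one must check that rotation by $r$ has the same fixed sequences as rotation by $\gcd(r,m)$, and that the number of $r\in\{0,\dots,m-1\}$ with $\gcd(r,m)=d$ is $\phi(m/d)$. Both are standard facts about the cyclic group, after which the rest is bookkeeping.

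Alternatively, one can simply cite \cite{cc}, where this generating function is derived.
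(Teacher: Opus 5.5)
Your proof is correct. Each step checks out:
\begin{itemize}
\item Burnside's lemma applied to rotation orbits among compositions with a fixed number $m$ of parts;
\item the identification of sequences fixed by rotation by $r$ with $\gcd(r,m)$-periodic sequences, counted by $[x^n]f(x^{m/\gcd(r,m)})^{\gcd(r,m)}$;
\item the count $\phi(m/d)$ of rotations $r$ with $\gcd(r,m)=d$;
\item the resummation $m=jd$ into $\sum_j\frac{\phi(j)}{j}\log\frac{1}{1-f(x^j)}$.
\end{itemize}
Your degree bound (terms with $m>n$ contribute nothing to $[x^n]$) justifies the formal manipulations, whether $A$ is finite or infinite.

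The paper gives no proof of this lemma. It imports the generating function from Hadjicostas \cite{cc} and uses it only to bound the number of weight patterns of boundary faces, so your closing remark about citing \cite{cc} is exactly what the paper does. Your derivation is the standard unlabelled cycle construction (Burnside/P\'olya), so it is not a new idea relative to the source. What it adds is that the lemma becomes self-contained, and it shows where $\phi$ and the logarithm come from. The paper's citation buys brevity, which is reasonable for a classical enumerative fact that is peripheral to the graph-theoretic argument.
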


Set $A=\{1,2\}$, and apply Taylor series $\log(1+x)=\sum\limits_{i\geq 1} \frac{(-1)^{i-1}}{i} x^i$ to the generating function, then we get the equation

\begin{displaymath}
\sum\limits_{n\geq 1} c_A(n) x^n=\sum\limits_{n\geq 1}\sum\limits_{i\geq 1} \frac{\phi(n)}{n}\cdot\frac{x^{ni}(1+x^n)^i}{i}.
\end{displaymath}

For $k\in \mathbb{Z}^+$, comparing the coefficients of $x^k$ on both sides, we have
\begin{displaymath}
c_A(k)=\sum\limits_{d|k}\sum\limits_{j=0} \frac{\phi(d)\binom{\frac{k}{d}-j}{j}}{k-dj}.
\end{displaymath}
Taking $0$s into consideration, by Proposition \ref{finite}, we set $M=3\cdot 2^4=48$ when $k=4$ and $M=3\cdot 4^5$ when $k=5$.

By the construction of $Q$, $|E(Q)|$ is exactly the value $|\mathcal{F}_1|+|\mathcal{F}_2|$ in the normal embedding of $(G, \sigma)$. We finally prove the finiteness of $\mathcal{S}^*(4)$ and $\mathcal{S}^*(5)$ by showing that the face number of the members is limited.

\section{Acknowledgement}

The author would like to express sincere gratitude to Professor Reza Naserasr and Xuding Zhu, for their constructive suggestions in the refinement of the script. The author would also like to thank Deping Song, Lujia Wang and Zhouningxin Wang for their helpful comments during the writing of the paper.

\end{document}